\documentclass[10pt,a4paper,reqno,oneside]{amsart}

\usepackage{amsrefs}

\usepackage{array}
\usepackage{url}
\usepackage{multicol}
\usepackage[linktocpage = true]{hyperref}
\usepackage{todonotes}

\hypersetup{
 colorlinks = true,
 citecolor = blue,
}
\usepackage{color}
\definecolor{red}{rgb}{1,0,0}
\definecolor{green}{rgb}{0,1,0}
\definecolor{blue}{rgb}{0,0,1}
\definecolor{refkey}{gray}{.625}
\definecolor{labelkey}{gray}{.625}
\usepackage[a4paper]{geometry}
\usepackage{amsmath,amssymb}
\usepackage{amssymb}
\usepackage{amsmath}
\usepackage{amsthm}
\usepackage{enumerate}
\usepackage{graphicx}

\usepackage{tikz-cd}
\allowdisplaybreaks

\def\id{\mathrm{Id}}

\makeatletter

\newcommand{\Rmnum}[1]{\expandafter\@slowromancap\romannumeral #1@}
\makeatother

\theoremstyle{plain}
\newtheorem{thm}{\protect\theoremname}[section]
\newtheorem{prop}[thm]{\protect\propositionname}
\newtheorem{cor}[thm]{\protect\corollaryname}
\newtheorem{lem}[thm]{\protect\lemmaname}

\theoremstyle{definition}

\newtheorem{defn}[thm]{\protect\definitionname}
\newtheorem{remark}[thm]{\protect\remarkname}
\newtheorem{notation}[thm]{\protect\notationname}

\AtBeginDocument{
	
}

\makeatother

  \providecommand{\corollaryname}{Corollary}
  \providecommand{\examplename}{Example}
  \providecommand{\lemmaname}{Lemma}
  \providecommand{\propositionname}{Proposition}
  \providecommand{\theoremname}{Theorem}
  \providecommand{\definitionname}{Definition}
  \providecommand{\remarkname}{Remark}
  \providecommand{\notationname}{Notation}

  \makeatletter
  \newcommand{\be}{%
  \begingroup
  \eqnarray%
   \@ifstar{\nonumber}{}%
  }

\newcommand{\newF}{\tilde{\mathcal{F}}}

\newcommand{\F}{\mathcal{F}}

\newcommand{\C}{\mathcal{C}}

\newcommand{\Aut}{\operatorname{Aut}}
\newcommand{\End}{\operatorname{End}}

\makeatother

\begin{document}

\title[A Note on cyclotomic function fields with quadratic modulus]{A Note on cyclotomic function fields with quadratic modulus}
%

\author{Haojie Chen}
\address{Sun Yat-Sen University, School of Mathematical, Guangzhou, China}
\email{\href{chenhj69@mail2.sysu.edu.cn}{chenhj69@mail2.sysu.edu.cn}}

\author{Chuangqiang Hu}
\address{Sun Yat-Sen University, School of Mathematical, Guangzhou, China}
\email{\href{huchq@bimsa.cn}{huchq@bimsa.cn}}
%

\allowdisplaybreaks

\begin{abstract}
A longstanding and important problem in algebraic geometry is the characterization of  algebraic function fields. In this paper, we focus on the characterization problem for cyclotomic function field $L(\Lambda_M)$, which is an important class of explicit function fields with applications in number theory and coding theory. Motivated by Arakelian and Quoos' classification of $L(\Lambda_M)$ with an irreducible quadratic modulus, we provide a complete characterization of the cyclotomic function field $L(\Lambda_M)$ with modulus $M = x^2$. More precisely, we prove that a function field $\mathcal{F}$ over $\mathbb{F}_q$ is $\mathbb{F}_q$-isomorphic to $L(\Lambda_{x^2})$ if and only if it satisfies the following three conditions: (i) $\mathcal{F}$ has a subgroup $G$ isomorphic to the direct product $(\mathbb{F}_q,+) \times \mathbb{F}_q^*$; (ii) its genus is $g(\mathcal{F}) = 1 + q(q-3)/2$; and (iii) the cardinality of $\mathbb{F}_q$-rational places is exactly $q+1$.
\end{abstract}
\maketitle{}

%
%







\section{Introduction}
The study of birational invariants such as genus, automorphism group, and number of rational places plays a fundamental role in the classification of algebraic function fields. The simplest example is the case of Hermitian function field and its subfields, in which we obtain a complete characterization via the two invariants: the number of rational places and the genus. Recall that a function field $F$ defined over the finite field $\mathbb{F}_{q^2}$ is called maximal if the number of rational places $N(F)$ attains the Hasse-Weil bound, i.e., 
 \[ N( F ) = q^2+1+2gq , \]
  where $g$ is the genus of $F$. It is well known that the Hermitian function field, defined by the equation 
  \[ y^q + y = x^{q+1} \]
   has genus $q(q-1)/2$ with $N=q^4+1+2 g q^2$ rational places. As shown in \cite{MR1305281}, the Hermitian function field is the unique maximal function field over $\mathbb{F}_{q^2}$ with genus $q(q-1)/2$, up to $\mathbb{F}_{q^2}$-isomorphism. In fact, it is the largest possible genus for a maximal function field over $\mathbb{F}_{q^2}$. 
The second largest genus for a maximal function field over $\mathbb{F}_{q^2}$ is $\left\lfloor\frac{(q-1)^2}{4}\right\rfloor$. The examples of such function fields can be obtained as subfields of the Hermitian function field, namely $y^{\frac{q+1}{2}}=x^q+x$ for odd $q$ (see \cite{MR1485426}) and $y^{q+1}=x^{q/2}+x^{q/4}+\cdots+x^2+x$ for even $q$ (see \cite{MR4329278}).
In \cite{MR1485426} and \cite{MR4329278} it was proven that the two function fields above are the only maximal function fields, up to $\mathbb{F}_{q^2}$-isomorphism, of genus $\left\lfloor\frac{(q-1)^2}{4}\right\rfloor$.

Conversely, there are also examples that indicate that the genus and the rational places are not enough to classify the function fields. In \cite{MR2257083}, Giulietti et al. classified the family of maximal function fields $\mathbb{F}_{q^2}(x, y)$ of genus $q-1$ over $\mathbb{F}_{q^2}$ defined by
$$
y^{q+1}=x^{2 i}\left(x^2+1\right),
$$
under the conditions that $\frac{q+1}{2}>3$ is a prime number and $1 \leqslant i \leqslant\frac{q-3}{2}$. They also proved that this family gives rise to roughly $(q+1) / 12$ non-isomorphic maximal function fields.

Besides, some interesting examples yield that some algebraic function fields can be characterized by its genus and a certain subgroup of its automorphism group. For instance, the work in \cite{MR3339574} showed that the Artin-Mumford function field is the unique function field (up to $\mathbb{F}_p$-isomorphism) over $\mathbb{F}_p$ of genus $(p-1)^2$ whose automorphism group contains a subgroup isomorphic to $\left(\mathbb{Z}_p \times \mathbb{Z}_p\right) \rtimes D_{p-1}$.
Similarly, some function fields can be characterized by specific automorphism subgroups together with the structure of their fixed fields, as illustrated in \cite{MR3670362}.

Some classification results require the combination of the three invariants above.
When $ q = n^3 $, the authors of \cite{MR2448446} showed that the GK function field is the unique $\mathbb{F}_{q^2}$-maximal function field, up to $\mathbb{F}_{q^2}$-isomorphism, of genus  
$g = \frac{1}{2}(n^3 + 1)(n^2 - 2) + 1$
with an automorphism group over $\mathbb{F}_{q^2}$ of order $n^3(n^3 + 1)(n^2 - 1)(n^2 - n + 1)$.  
Remarkably, in some cases, three invariants are insufficient to characterize all kinds of function fields. The work in \cite{MR4843058} showed examples of maximal function fields that have the same genus and automorphism group but are not isomorphic. Additionally, some classification results require the algebraic structure of the Weierstrass semigroup. For example, it was shown in \cite{MR3944454} that the so-called Ree function field is unique, up to $\mathbb{F}_q$-isomorphism, given its number of rational places, its genus, and the shape of two elements of the Weierstrass semigroup at a rational place.

In this paper, we try to derive a similar classification result as in \cite{arakelian2024cyclotomicfunctionfieldsfinite} concerning the family of cyclotomic function fields. Cyclotomic function fields play an important role in class field theory of function fields. Applying the theory of cyclotomic function fields,  the authors of \cite{MR4433231} constructed binary sequences with low correlation. Moreover, \cite{MR3734181} gave a construction of sequences with high nonlinear complexity. For other interesting applications of cyclotomic function fields, we refer to \cite{MR4116821,MR4017935,MR4195891} and the references therein.

It is natural to ask whether a cyclotomic function field can be characterized by its genus, number of rational places, and a specific subgroup of its automorphism group. In \cite{arakelian2024cyclotomicfunctionfieldsfinite}, Arakelian and Quoos partially answered this question for the case where $M$ is an irreducible quadratic modulus. They showed that a function field over $\mathbb{F}_q$ with genus $\frac{(q+1)(q-2)}{2}$, having $q+1$ rational places, and possessing a subgroup of automorphisms isomorphic to $\mathbb{F}_{q^2}^*$, must be $\mathbb{F}_q$-isomorphic to a cyclotomic function field of this type. Inspired by their work, this paper aims to prove a similar characterization for the cyclotomic function field $L(\Lambda_M)$ with $M = x^2$, using its genus, number of rational places, and a certain automorphism subgroup.

This paper is outlined as follows.
In Section \ref{sec:preliminaries}, we introduce notation and some results concerning function fields, automorphism groups, cyclotomic function fields and Kummer extensions. 
In Section \ref{sec:mainresult}, we give the complete characterization of $L(\Lambda_M)$ with $M=x^2$ by a key claim about the short orbits. Finally, in Section \ref{sec:proofofLemma}, we provide a proof of the claim.
\section{Preliminaries}\label{sec:preliminaries}
 In this section, we recall some notation and  fundamental results concerning the theory of automorphism groups of algebraic function fields and cyclotomic function fields. Throughout, let $\mathbb{F}_q$ denote the finite field with $q$ elements, where $q$ is a power of a prime integer $p$, and let $\overline{\mathbb{F}}_q$ denote its algebraic closure.

\subsection{Hurwitz genus formula}\label{sub:funtionfields}
Let $F / K$ be an algebraic function field of genus $g$ with constant field $K$ and let $F' / F$ be a finite separable extension. Let $K^{\prime}$ denote the constant field of $F'$ and $g^{\prime}$ the genus of $F' / K^{\prime}$. 
Consider a place $P$ of $F$ and a place $P'$ of $F'$ lying over $P$. Let $d\left(P^{\prime} \mid P\right)$ be the different exponent of $P^{\prime}$ lying over $P$.
Denote the set of places of $F$ by $\mathbb{P}_F$. We define the different of $F' / F$ as  
$$
\operatorname{Diff}\left(F' / F\right):=\sum_{P \in \mathbb{P}_F} \sum_{P^{\prime} \mid P} d\left(P^{\prime} \mid P\right) \cdot P^{\prime} .
$$
Then we have the famous Hurwitz genus formula:
\begin{thm}\cite{MR2464941}\label{thm:genus}
    With the notation above, the genus $ g' $ of $F'$ is given by the formula
$$
2 g^{\prime}-2=\frac{\left[F': F\right]}{\left[K^{\prime}: K\right]}(2 g-2)+\operatorname{deg} \operatorname{Diff}\left(F' / F\right).
$$
\end{thm}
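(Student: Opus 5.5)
The statement is the Hurwitz genus formula, which the paper cites from the literature rather than proves. If I were to prove it, I would compare canonical divisors of $F$ and $F'$ through the cotrace map on differentials. I will also use the standard reduction that, after a constant field extension, I may assume $K'=K$. Constant field extensions are unramified, preserve the genus, and scale degrees by the factor $[K':K]$. This is exactly why $[F':F]$ gets divided by $[K':K]$ in the formula.

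\textbf{Pulling back a differential.} Fix a nonzero Weil differential $\omega$ of $F$, and let $\omega'$ be its cotrace, i.e. its pullback to $F'$. Since $F'/F$ is separable, $\omega'\neq 0$. The central local claim is
$$(\omega') = \operatorname{Con}_{F'/F}\big((\omega)\big) + \operatorname{Diff}(F'/F).$$
This is Dedekind's different theorem in the following form. Fix a place $P$ of $F$ and let $\mathcal{O}'_P$ be the integral closure of the valuation ring $\mathcal{O}_P$ in $F'$. Then the complementary module $C_P$ of $\mathcal{O}'_P$ with respect to the trace form equals $t^{-1}\mathcal{O}'_P$ for any $t\in F'$ with $v_{P'}(t)=d(P'|P)$ for all $P'\mid P$.

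To prove this, I would define the different locally as the inverse of the complementary module. Then I would identify Weil differentials of $F'$ that are locally regular at the $P'$ over $P$ with the traces landing in $\mathcal{O}_P$, using the adelic description of differentials and the trace compatibility of local components. With that identification, the valuation of $\omega'$ at each $P'$ is $e(P'|P)v_P(\omega)+d(P'|P)$.

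\textbf{Taking degrees.} Canonical divisors have degree $2g-2$ and $2g'-2$ respectively. Degrees of conorms scale by the field degree because $\sum_{P'|P} e(P'|P)f(P'|P)=[F':F]$. Taking degrees in the displayed identity therefore gives the formula.

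\textbf{Main obstacle.} The hardest step is the local identification of $(\omega')$ with the conorm plus the different. This requires showing that the complementary module is a fractional ideal of $\mathcal{O}'_P$, which is a semilocal Dedekind ring, and matching its exponent with the different exponent defined via local traces. Finiteness of $\operatorname{Diff}$, meaning only finitely many ramified places, follows from separability, since the discriminant of a separable generator has only finitely many zeros.
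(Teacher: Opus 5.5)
Your argument is correct and is essentially the standard proof in the cited reference (Stichtenoth, Theorems 3.4.6 and 3.4.13), which the paper quotes without proof: take the cotrace $\omega'$ of a nonzero Weil differential $\omega$, use $(\omega')=\operatorname{Con}_{F'/F}((\omega))+\operatorname{Diff}(F'/F)$, and take degrees. The only real deviation is your preliminary reduction to $K'=K$. The reference handles $K'\neq K$ directly by putting $\operatorname{Tr}_{K'/K}$ into the definition of the cotrace and using $\deg \operatorname{Con}_{F'/F}(A)=\frac{[F':F]}{[K':K]}\deg A$, which avoids the two extra facts your reduction tacitly relies on: genus invariance under separable constant field extension, and transitivity of the different to get $\operatorname{Diff}(F'/F)=\operatorname{Diff}(F'/FK')$.
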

Let $e\left(P^{\prime} \mid P\right)$ denote the ramification index of $P'$ lying over $P$. The following theorem establishes a close relationship between the ramification index and the different exponent.
\begin{thm}\label{thm:Dedekind's Different Theorem}\cite{MR2464941}
    With notation as above, we have
    \begin{enumerate}
        \item  $d\left(P^{\prime} \mid P\right) \geqslant e\left(P^{\prime} \mid P\right)-1$.
         \item  $d\left(P^{\prime} \mid P\right)\geqslant  e\left(P^{\prime} \mid P\right)$ if and only if $e\left(P^{\prime} \mid P\right)$ is divisible by $\operatorname{char} K$.
    \end{enumerate}
\end{thm}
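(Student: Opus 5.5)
This is Dedekind's Different Theorem. The plan is to work locally. Choose a place $P'$ of $F'$ over $P$ and pass to completions. Replacing $F$ and $F'$ by $\widehat{F}_P$ and $\widehat{F'}_{P'}$ does not change $e(P'|P)$ or $d(P'|P)$, because the different is compatible with completion. We may also first replace $F$ by the maximal unramified subextension. This leaves $e$ unchanged, and the different exponent is also unchanged, since the different is multiplicative in towers and is trivial in the unramified part. After these reductions, $\widehat{F'}_{P'}/\widehat{F}_P$ is totally ramified of degree $e=e(P'|P)$, and the residue fields agree (separability of the residue extension follows from the finite or perfect constant field setting).

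Next, pick a uniformizer $t$ of $P'$. Its minimal polynomial over the local field $\widehat{F}_P$ is Eisenstein,
\[ f(X)=X^e+a_{e-1}X^{e-1}+\cdots+a_0, \]
with $v_P(a_i)\ge 1$ and $v_P(a_0)=1$, and $\mathcal{O}_{P'}=\mathcal{O}_P[t]$. With this monogenic basis the different exponent is $d=v_{P'}(f'(t))$, where
\[ f'(t)=e\,t^{e-1}+\sum_{i=1}^{e-1} i\,a_i t^{i-1}. \]

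Now compute the valuation term by term, writing $v=v_{P'}$ and using $v(a_i)=e\,v_P(a_i)$. For the leading term, $v(e\,t^{e-1})=e-1$ if $p\nmid e$, and it is $\infty$ (if the constants have characteristic $p$) or at least $e-1+e$ if $p\mid e$. For $1\le i\le e-1$, $v(i a_i t^{i-1})\ge e+i-1\ge e$. Since these terms are in distinct residue classes mod $e$ wherever finite, their valuations are pairwise distinct, so $v(f'(t))$ is exactly the minimum of the finite ones. This gives (1) at once, since every term has valuation $\ge e-1$. It also gives $d=e-1$ when $p\nmid e$, because the leading term has valuation $e-1$ and all other terms have valuation $\ge e$. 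When $p\mid e$, the leading term has valuation $\ge 2e-1$ and every other term has valuation $\ge e$, so $d\ge e$. Together these prove (2).

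The main obstacle is the reduction step. One has to justify that the different exponent is unchanged by completion and by removing the unramified part, which is a transitivity-of-different argument, and that $\mathcal{O}_{P'}$ is monogenic over $\mathcal{O}_P$, so that the different is generated by $f'(t)$. If the residue field extension were inseparable this would fail, but here constant fields are perfect (finite or algebraically closed), which rules that out. Alternatively, one could simply cite \cite{MR2464941} for these standard facts.
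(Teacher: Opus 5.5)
Your proof is correct. It is the classical local argument, as in Serre's \emph{Local Fields}. The paper gives no proof of this statement; it only cites \cite{MR2464941}, where the theorem is proved globally and without completions. That proof works directly from the description of $d(P'|P)$ through the complementary module $\{z\in F' : \operatorname{Tr}_{F'/F}(z\,\mathcal{O}'_P)\subseteq\mathcal{O}_P\}$ of the integral closure $\mathcal{O}'_P$ of $\mathcal{O}_P$ in $F'$. There, (1) follows because the trace of any element of the Jacobson radical of $\mathcal{O}'_P$ lies in $P$. Part (2) follows from the reduction formula $\operatorname{Tr}_{F'/F}(z)(P)=\sum_i e(P_i|P)\,\operatorname{Tr}_{F'_{P_i}/F_P}\bigl(z(P_i)\bigr)$ on residue fields. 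This formula is obtained by computing the trace of multiplication by $z$ on $\mathcal{O}'_P/P\mathcal{O}'_P\cong\prod_i\mathcal{O}_{P_i}/P_i^{e(P_i|P)}$. The factor $e(P'|P)$ kills the contribution of $P'$ exactly when $p\mid e$. Otherwise, separability of the residue extension gives an element with nonzero residue trace, which forces $d=e-1$.

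Your route puts the whole dichotomy into the single term $e\,t^{e-1}$ of $f'(t)$, and it is very short. The price is that it rests on exactly the machinery the global proof avoids: compatibility of the different with completion, transitivity of the different in towers, $\mathcal{O}_{P'}=\mathcal{O}_T[t]$ for totally ramified extensions of complete discrete valuation rings, and $\mathcal{D}=(f'(t))$ for monogenic extensions.

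Two wording fixes. First, the maximal unramified subextension $T$ must be taken inside $\widehat{F'}_{P'}/\widehat{F}_P$ after completing; globally, a subfield with the required properties need not exist when $F'/F$ is not Galois. Second, the Eisenstein polynomial is then over $T$, not over $\widehat{F}_P$.

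Your separability remark is well placed. Statement (2) genuinely fails without it: with $e=1$ and an inseparable residue extension one gets $d\geqslant 1$. So the statement tacitly assumes a perfect constant field, which holds for $\mathbb{F}_q$ and $\overline{\mathbb{F}}_q$ as used in the paper.
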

Let $P$ be a place of $F / K$ and let $P_1', \ldots, P_m'$ be all the places of $F' / K^{\prime}$ lying over $P$. Let $e_i:=e\left(P_i' \mid P\right)$ denote the ramification index and $f_i:=f\left(P_i' \mid P\right)$ the relative degree of $P_i' \mid P$. Then the Fundamental Equality states that 
\begin{equation}\label{eq:FundamentalEquality}
    \sum_{i=1}^m e_i f_i=\left[F': F\right].
\end{equation}
\subsection{Automorphism groups of function fields}
Now we recall some basic results of the  automorphism group  of function fields. 
Let $ F $ be an algebraic function field over $ K $, and let $G$ be a subgroup of its automorphism group $\Aut_{ K }( F )$. Obviously, the group $ G $ can be viewed as an action on the set of places of $ F $. For a place $ P $ of $ F $, we denote by 
\[
    G_{ P }= \{h \in G: h( P )=P \}
\]
the stabilizer of $ P $, and by
\[
    G( P )=\{h( P): h \in G\}
\]
the orbit of $  P $ under the action of $G$. We refer to $G( P )$ as a $G$-orbit of $ F $.  
 \begin{defn}\label{def:shortOrbit}
    A $G$-orbit of $F $ is called a short $G$-orbit of $F$, if its cardinality is less than $|G|$.
\end{defn}
Denote by $ F ^G$ the fixed field of $ F $ under $G$. From Galois theory, the extension $ F/ F^G$ is known to be a Galois extension with Galois group $G $.
Suppose that $ P $ is a place lying over $ Q $ in the field extension  $ F/ F^G$. 
\begin{lem}\cite{MR2464941}
Let $ e(P  \mid Q ) $ and $f (P \mid Q ) $ denote the ramification index and relative degree as above.  
Then 
\begin{equation}\label{eq:G_P}
     | G_{ P } | = e(P  \mid Q ) f (P \mid Q )
\end{equation}
and 
\begin{equation}\label{eq:GP}
   | G( P )|  = \frac{ |G| }{e(P  \mid Q ) f (P \mid Q )}. 
\end{equation}
\end{lem}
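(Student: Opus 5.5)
The plan is to work with the Galois extension $F/F^G$, whose Galois group is exactly $G$, and to describe how $G$ acts on the places lying over a fixed place $Q$ of $F^G$. Let $P$ be a place of $F$ over $Q$. I will prove the two statements in the following order: first I show that $G$ acts transitively on the places over $Q$, and then I compute $|G_P|$. Equation \eqref{eq:GP} will then follow from \eqref{eq:G_P} by the orbit--stabilizer theorem.

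For transitivity, let $P_1,\dots,P_m$ be the places over $Q$. Each $h\in G$ fixes $F^G$ pointwise, so it sends places over $Q$ to places over $Q$; hence $G(P)\subseteq\{P_1,\dots,P_m\}$. For the converse inclusion I take the standard route through the approximation theorem. Suppose some $P_j$ is not of the form $h(P)$. Then I can choose $z\in F$ with $v_{P_j}(z)>0$ and $v_{h(P)}(z)=0$ for all $h\in G$. Put $N(z)=\prod_{h\in G}h(z)\in F^G$. We have $v_{P_j}(N(z))>0$, because $v_{P_j}(h(z)) = v_{h^{-1}(P_j)}(z)\ge 0$ and the factor with $h=\mathrm{id}$ has strictly positive valuation. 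So $N(z)$ lies in $Q$, and therefore $v_P(N(z))>0$. But $v_P(N(z))=\sum_h v_{h^{-1}(P)}(z)=0$, a contradiction. Thus $G(P)=\{P_1,\dots,P_m\}$.

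Next, since every $h\in G$ is an automorphism over $F^G$, it preserves ramification indices and residue degrees: $e(h(P)\mid Q)=e(P\mid Q)$ and $f(h(P)\mid Q)=f(P\mid Q)$. Writing $e=e(P\mid Q)$ and $f=f(P\mid Q)$, the Fundamental Equality \eqref{eq:FundamentalEquality} gives $mef=[F:F^G]=|G|$. Combining this with the orbit--stabilizer relation $|G(P)|\cdot|G_P|=|G|$ and with $|G(P)|=m$ yields $|G_P|=ef$, which is \eqref{eq:G_P}, and also $|G(P)|=|G|/(ef)$, which is \eqref{eq:GP}.

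The only step needing care is transitivity, in particular the choice of $z$ via the approximation theorem. The rest is bookkeeping.
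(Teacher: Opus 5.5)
The paper gives no proof of this lemma; it quotes it from \cite{MR2464941}. Your outline is the standard argument: $G$ acts transitively on the places over $Q$, $e$ and $f$ are $G$-invariant, then the Fundamental Equality and orbit--stabilizer. Your final bookkeeping paragraph is correct. There is, however, a gap in the transitivity step.

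You choose $z$ subject only to $v_{P_j}(z)>0$ and $v_{h(P)}(z)=0$ for $h\in G$. You then assert $v_{P_j}(h(z))=v_{h^{-1}(P_j)}(z)\geqslant 0$ for every $h\in G$. Since $P_j\notin G(P)$, the place $h^{-1}(P_j)$ also lies outside $G(P)$, but it need not equal $P_j$: it can be any place of the orbit $G(P_j)$. At those places you imposed no condition on $z$, so $z$ may have a pole at some $h^{-1}(P_j)\neq P_j$. In that case
\[
v_{P_j}(N(z))=|G_{P_j}|\sum_{P'\in G(P_j)}v_{P'}(z)
\]
can be $\leqslant 0$, and you cannot conclude that $N(z)\in Q$.

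The repair is easy. Only finitely many places lie over $Q$, so the approximation theorem lets you also require $v_{P'}(z)\geqslant 0$ for every place $P'$ over $Q$ outside $G(P)$; for instance, take $v_{P'}(z)>0$ for all $P'\in G(P_j)$. Then $z$ is integral at every place over $Q$. Since $G$ permutes these places, every $h(z)$ is integral there as well, which gives $v_{P_j}(N(z))\geqslant v_{P_j}(z)>0$. This integrality is exactly what the classical Chinese-remainder proof of transitivity uses by working inside the integral closure of $\mathcal{O}_Q$. With this one extra condition on $z$, your argument is complete.
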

\begin{remark}
If we assume that the constant field $ K $ is algebraically closed, then 
$f (P \mid Q ) = 1 $. In this situation, Equation \eqref{eq:GP} yields that the $G$-orbit is short if and only if $ e(P  \mid Q ) > 1 $.
\end{remark}

\subsection{Hurwitz Genus Formula in terms of short orbits}
Now we assume that $\tilde{F}  $ is a function field over an algebraically closed field $ \bar{\mathbb{F}}_q $. Let $\tilde{F}^G$ be the fixed field of $\tilde{F}$ under the action of a subgroup $G$ of $\Aut_{\bar{\mathbb{F}}_q}(\tilde{F})$.
Let $g$ and $g^G$ denote the genus of $\tilde{F}$ and $\tilde{F}^G$, respectively. Let $\Omega_1, \ldots, \Omega_k$ be all the short $G$-orbits and let $l_1, \ldots, l_k$ be the cardinalities of the short $G$-orbits. Denote by $e_i$ the ramification index of $\Omega_i$ and $d_i$ the different exponent of $\Omega_i$. This is well defined, since $\tilde{F} /\tilde{F}^G $ is a Galois extension. In terms of short orbits, the Hurwitz Genus Formula (Theorem \ref{thm:genus}) can be rewritten as follows:

\begin{align}
    2 g-2 &= |G|\left(2 g^G-2\right)+ \sum_{i=1}^k d_i l_i \nonumber \\
    &\geqslant |G|\left(2 g^G-2\right)+\sum_{  1 \leqslant i\leqslant k , p \mid e_i} e_i l_i+\sum_{ 1 \leqslant i\leqslant k  ,p \nmid e_i } (e_i-1) l_i  \label{eq:HurWitzgenusFormula11}\\
    & \geqslant |G|\left(2 g^G-2\right)+\sum_{  1 \leqslant i\leqslant k   } (e_i-1)l_i\label{eq:HurWitzgenusFormula12}.
\end{align}
In particular, if $\operatorname{gcd}(p,|G|)=1$, then 
\begin{equation}\label{eq:HurWitzgenusformula2}
    2 g-2 = |G|\left(2 g^G-2\right)+\sum_{i=1}^k\left( e_i-1 \right)l_i =|G|\left(2 g^G-2\right)+\sum_{i=1}^k\left(|G|-l_i\right) .
\end{equation}

\subsection{Cyclotomic function fields}\label{sub:cyclotomicfunctionfield}
We provide a concise overview of the theory of cyclotomic function fields. All results presented in this subsection can be found in \cite{MR2241963}.

Let $L = \mathbb{F}_{q}(x)$ be the fraction field of polynomial ring $\mathbb{F}_q[x]$ over $ \mathbb{F}_q$. Denote by $\bar{L}$ the algebraic closure of $L$. 
A map $ \varphi : \bar{L} \to \bar{L} $ is called an $ \mathbb{F}_q $-linear map if \[
\varphi(a+b)=\varphi(a)+\varphi(b) \]
and
\[
\varphi(\alpha a)=\alpha \varphi(a)  \]
hold for $ a,b\in \bar{L} $ and $ \alpha \in \mathbb{F}_q $.
Let $\End_{\mathbb{F}_{q}}(\bar{L})$ denote the set of $ \mathbb{F}_q $-linear maps. Equipped with the composition map, $\End_{\mathbb{F}_{q}}(\bar{L})$ is a non-commutative $\mathbb{F}_q$-algebra.

\begin{defn}[Carlitz Module]
Consider a specific $ \mathbb{F}_q $-linear map $ \C_x \in \End_{\mathbb{F}_{q}}(\bar{L})$ given by
$$
\C_x (u)=u^q+x u, \quad u \in \bar{L}.
$$
Denote by $\C_x^{(k)} $  the $k $-th composition of $ \C_x $.
Given any $f(x) \in \mathbb{F}_q[x]$, the substitution $x^k  \rightarrow \C_x^{(k)} $ in $f$ gives an element, say $ \C_f $, of $\End_{\mathbb{F}_{q}}(\bar{L})$. 
Precisely, if $ f(x)=a_n x^n+\cdots+a_1 x+a_0 $,  then
$$
\C_f (u)=a_n \C_x^{(n)}(u)+\cdots+a_1 \C_x^{(1)}(u)+a_0 u 
$$
for all $u \in \bar{L}$.
Thus, we obtain a ring homomorphism 
\begin{align*}
  \mathbb{F}_q[x] & \rightarrow \End_{\mathbb{F}_{q}}(\bar{L}) \\
f  & \mapsto \C_f ,
\end{align*}
 which is usually called the Carlitz module over $\bar{L}$.
\end{defn}
Moreover, the homomorphism $\C$ equips $\bar{L}$ with an $\mathbb{F}_q[x]$-module structure.
If $u \in \bar{L}$ and $M \in \mathbb{F}_q[x]$, we write $u^M=\C_M(u)$.  
For $M, N \in \mathbb{F}_q[x]$, it is evident that
$$
u^{M+N}=u^M+u^N \quad \text { and } \quad u^{M N}=\left(u^M\right)^N.
$$
Suppose that $M$ is a non-zero polynomial. Then the set of $M$-torsion points
\[
\Lambda_M=\left\{u \in \bar{L} \mid u^M=0\right\}
\]
forms a finite $\mathbb{F}_q[x]$-submodule of $\bar{L}$. In fact, it is well-known that $\left|\Lambda_M\right|=q^{\operatorname{deg} M}$ and $\Lambda_M \cong \mathbb{F}_q[x] /(M)$. 
\begin{defn}[Cyclotomic Function Field]
The cyclotomic function field with modulus $M$, denoted by $L(\Lambda_M)$, is defined as the subfield of $\bar{L}$ generated over $L$ by the elements of $\Lambda_M$.
\end{defn}
It is well-known that $L\left(\Lambda_M\right)$ is a Galois extension of $L$ with Galois group
\[
\operatorname{Gal}\left(L\left(\Lambda_M\right) / L\right) \cong(\mathbb{F}_q[x] /(M))^*,
\]
where $(\mathbb{F}_q[x] /(M))^*$ is the unit group of $\mathbb{F}_q[x] /(M)$. 
In particular, we have the following results in the case $M=x^{n+1}$ where $ n \geqslant 1 $ (see \cite{MR3531248} and \cite{MR2241963}). 
\begin{thm}
Let $ L(\Lambda_M) $ be the cyclotomic function field with modulus $ M=x^{n+1} $ where $ n \geqslant 1 $. Then
\begin{enumerate}
    \item $\Aut_{\mathbb{F}_q}(L(\Lambda_M))$ has a subgroup $G$ isomorphic to $ ( \mathbb{F}_q[x]/(x^{n+1}) )^*.$
     \item $g(L(\Lambda_M))=1+\frac{q^n(nq-n-2)}{2}$.
     \item $L(\Lambda_M)$ has exactly $ q^n+1 $ rational places over $\mathbb{F}_q$.
     \item If $g(L(\Lambda_M)) \geqslant 2$, then the automorphism group of $L(\Lambda_{x^{n+1}})$ over $\mathbb{F}_q$ coincides with the Galois group
$\operatorname{Gal}\bigl(L(\Lambda_{x^{n+1}})/L\bigr) \cong (\mathbb{F}_q[x]/(x^{n+1}))^*$.
\end{enumerate}
\end{thm}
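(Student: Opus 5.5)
The statement collects four standard facts, and I would prove them along the tower $L\subset L(\Lambda_x)\subset L(\Lambda_{x^2})\subset\cdots\subset L(\Lambda_{x^{n+1}})$, using the Carlitz module explicitly.

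\textbf{Generators and part (1).} Choose a generator $\lambda_1$ of $\Lambda_x$, so $\lambda_1^{q-1}=-x$. Then $L(\Lambda_x)=\mathbb{F}_q(\lambda_1)$ is rational and is a Kummer extension of degree $q-1$ of $L$. Inductively pick $\lambda_{k+1}$ with $\C_x(\lambda_{k+1})=\lambda_{k+1}^q+x\lambda_{k+1}=\lambda_k$. Then $\lambda_{n+1}$ generates $\Lambda_{x^{n+1}}\cong \mathbb{F}_q[x]/(x^{n+1})$ as an $\mathbb{F}_q[x]$-module, and $L(\Lambda_{x^{n+1}})=L(\lambda_{n+1})$. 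For a class $a\in(\mathbb{F}_q[x]/(x^{n+1}))^*$, the map $\sigma_a:\lambda_{n+1}\mapsto \C_a(\lambda_{n+1})$ is an $L$-automorphism. Since the extension is Galois with group $(\mathbb{F}_q[x]/(x^{n+1}))^*$, part (1) follows with $G=\operatorname{Gal}(L(\Lambda_{x^{n+1}})/L)$. I write $|G|=q^n(q-1)$.

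\textbf{Ramification, rational places, genus.} I would use the classical decomposition laws.
\begin{itemize}
\item The place $P_x$ is totally ramified, with a unique rational place above it.
\item The infinite place $P_\infty$ has $e=q-1$ and $f=1$, so it splits into $q^n$ rational places; this is tame.
\item A finite place $P$ coprime to $x$ is unramified, and its Frobenius is the class of its monic generator modulo $x^{n+1}$.
\end{itemize}
For a rational place $x-a$ with $a\neq 0$, the class of $x-a$ modulo $x^{n+1}$ is not $1$ (here $n\ge 1$ is used), so $f>1$ and nothing above it is rational. Counting gives $1+q^n$ rational places, which is (3).

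For (2) I apply Theorem~\ref{thm:genus} with $g(L)=0$. The tame contribution at infinity is $q^n(q-2)$. The wild different exponent $d$ at $P_x$ I compute by the conductor--discriminant formula. Characters of $(\mathbb{F}_q[x]/(x^{n+1}))^*$ with conductor $x$ number $q-2$ (excluding the trivial one). For $2\le k\le n+1$, there are $(q-1)^2q^{k-2}$ characters with conductor exactly $x^k$. Hence
\[
d=(q-2)+\sum_{k=2}^{n+1}k(q-1)^2q^{k-2},
\]
and a telescoping computation should give $d=q^n\bigl((n+1)(q-1)-1\bigr)$. Then
\[
2g-2=-2q^n(q-1)+q^n(q-2)+d=q^n(nq-n-2),
\]
which is (2). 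As an alternative check, the upper numbering filtration $G^{k}=\{a\equiv 1 \bmod x^k\}$ with Hilbert's formula $d=\sum_i(|G_i|-1)$ gives the same value.

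\textbf{Part (4): the main obstacle.} Assume $g\ge 2$, so $A:=\Aut_{\mathbb{F}_q}(L(\Lambda_{x^{n+1}}))$ is finite and contains $G$. Let $S=\{a\equiv 1 \bmod x\}\le G$, the Sylow $p$-subgroup of $G$, of order $q^n$. Then $S$ fixes $P_x$ and has fixed field $\mathbb{F}_q(\lambda_1)$, which is rational.

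I would first show that $A$ fixes the place $\tilde P_x$ above $P_x$. This is the step I expect to be hardest. The plan is to compare the Hurwitz formula for $A$ acting on the $\overline{\mathbb{F}}_q$-extension with the large wild contribution at $\tilde P_x$. The constraint is that a $p$-subgroup of order $q^n$ with genus of this size forces a unique fixed place, via the Stichtenoth-type bounds on $|A_P|$ for wild stabilizers. The $A$-orbit of $\tilde P_x$ also lies among the $q^n+1$ rational places, and $\tilde P_x$ is distinguished there by its different exponent, since the other $q^n$ rational places are only tamely ramified over $L$.

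Once $A=A_{\tilde P_x}$, we get $A=A_1\rtimes C$ with $A_1$ the unique Sylow $p$-subgroup and $C$ cyclic of order prime to $p$. Comparing $A_1\supseteq S$ with the rational quotient $L(\Lambda_{x^{n+1}})^{A_1}$, and bounding $|C|\le q-1$ by its action on the short orbit of the $q^n$ places above infinity, should then yield $|A|\le q^n(q-1)=|G|$, hence $A=G$.
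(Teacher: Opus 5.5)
Your arguments for (1)--(3) are sound. They actually give more than the paper, which simply quotes this theorem from the literature. The telescoped value $d=q^n\bigl((n+1)(q-1)-1\bigr)$ is correct and yields the stated genus.

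The gap is in (4), at exactly the step you flag: showing that $A=\Aut_{\mathbb{F}_q}(F)$ fixes $\tilde P_x$. Neither justification you offer works.
\begin{enumerate}
\item Stichtenoth/Nakajima-type results that pin down the fixed place of a wild stabilizer need the $p$-group to be large relative to $g$ (e.g.\ $|S|>\frac{2p}{p-1}g$). But $2g-2=q^n(nq-n-2)\geqslant q^n=|S|$ precisely when $g\geqslant 2$, so they never apply in the range of (4).
\item The fact that $S$ has a unique fixed place is automatic: it is the only ramified place of $F/\mathbb{F}_q(\lambda_1)$. It tells you nothing about $A$ unless you already know $S\trianglelefteq A$.
\item Singling out $\tilde P_x$ by its different exponent, or as the only wildly ramified place over $L$, is circular. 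Ramification over $L=F^G$ is preserved only by automorphisms normalizing $G$, which is what you are trying to prove.
\end{enumerate}

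What $G\leqslant A$ actually gives is a dichotomy. $A$ permutes the $q^n+1$ rational places, whose $G$-orbits are $\{\tilde P_x\}$ and the $q^n$ places over $P_\infty$. So either $A$ fixes $\tilde P_x$, or $A$ is transitive on the rational places. In the latter case it is even $2$-transitive, because $S\leqslant A_{\tilde P_x}$ is regular on the places over $P_\infty$. Excluding this second alternative is the real content of (4), and it needs an idea absent from your sketch. Two possible routes:
\begin{enumerate}
\item Use that $F$ has $p$-rank $0$. This follows from Deuring--Shafarevich applied to $F/\mathbb{F}_q(\lambda_1)$, which is totally ramified at one place and unramified elsewhere. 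Hence every nontrivial $p$-subgroup of $\Aut_{\overline{\mathbb{F}}_q}(\overline{\mathbb{F}}_qF)$ fixes exactly one place. You would then still need an analysis excluding such a $2$-transitive group of degree $q^n+1$, with two-point stabilizers of order at most $q-1$, on a curve of this genus.
\item Find an intrinsic invariant separating $\tilde P_x$ from the places over $P_\infty$. For instance, when $n=1$, $F$ is the smooth plane curve $u^{q-1}=\lambda(v^q-v)$ of degree $q$, and $\tilde P_x$ is its only total inflection point.
\end{enumerate}

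Your last step also silently uses these facts. First, $A_1=S$ holds because, by $p$-rank $0$, $A_1$ acts semiregularly on the $q^n$ places over $P_\infty$, forcing $|A_1|\leqslant q^n$. Second, $|C|\leqslant q-1$ holds because a $p'$-group fixing the rational place $\tilde P_x$ embeds in $\mathbb{F}_q^*$ via $\sigma\mapsto\sigma(t)/t \bmod \tilde P_x$ for a local parameter $t$. It does not follow from its action on the orbit over infinity.
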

\subsection{Kummer extension}\label{sub:kunmmenextension}
Next, we give a brief introduction to the theory of Kummer extension. 
Let $K$ be a perfect field of characteristic $p \geqslant 0$. 
Assume that $K$ contains a primitive $n$-th root of unity $\zeta_n$ with $(n, p)=1$. Let $F$ be a function field over $K$ and $F'$  a finite field extension of $F$. 
\begin{thm}[Theorem 5.8.5 in \cite{MR2241963}] \label{thm:kummer_extensions}
    The field extension $F' / F $ is  cyclic of degree $n$ if and only if there exists some $y \in F'$ such that $F'=F(y)$ and the minimal polynomial of $y$ over $ F $ is given by 
   \[
   T^n-h \in F[T] 
   \]
 for some $ h \in F $. Furthermore, by applying a coordinate transformation, one can choose $h \in F $ such that $ 0 \leqslant v_{P} (h)\leqslant n-1 $ for  any place $P$ of $F$.  
\end{thm}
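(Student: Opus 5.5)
The plan is to treat the two implications separately with classical Kummer theory, and then handle the normalization of $h$ by rescaling $y$. Throughout, $\zeta_n\in K\subseteq F$ and $(n,p)=1$, so the polynomial $T^n-h$ is separable whenever $h\neq 0$.

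\emph{Sufficiency.} Assume $F'=F(y)$ and the minimal polynomial of $y$ is $T^n-h$.
\begin{enumerate}
\item The roots of $T^n-h$ are $\zeta_n^i y$ for $0\leqslant i\leqslant n-1$. They are pairwise distinct because $h\neq0$ and $p\nmid n$, and they all lie in $F'$ because $\zeta_n\in F$.
\item Hence $F'$ is the splitting field of a separable polynomial, so $F'/F$ is Galois of degree $n$.
\item Each $\sigma\in\operatorname{Gal}(F'/F)$ sends $y$ to a root, so $\sigma(y)=\zeta_n^{i(\sigma)}y$. The map $\sigma\mapsto \zeta_n^{i(\sigma)}$ is a group homomorphism into $\mu_n$, since $\sigma$ fixes $\zeta_n$.
\item This homomorphism is injective because $y$ generates $F'$. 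As both groups have order $n$, it is an isomorphism, and so $\operatorname{Gal}(F'/F)$ is cyclic.
\end{enumerate}

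\emph{Necessity.} Assume $F'/F$ is cyclic of degree $n$ with generator $\sigma$.
\begin{enumerate}
\item The norm of $\zeta_n$ is $N_{F'/F}(\zeta_n)=\zeta_n^n=1$. By Hilbert's Theorem 90 there is $y\neq0$ in $F'$ with $\sigma(y)=\zeta_n y$. Alternatively, I would use independence of characters to find $a\in F'$ whose Lagrange resolvent $y=\sum_i \zeta_n^{-i}\sigma^i(a)$ is nonzero.
\item Then $\sigma(y^n)=y^n$, so $h:=y^n\in F$.
\item The conjugates $\sigma^i(y)=\zeta_n^i y$ are $n$ distinct elements. Thus $[F(y):F]=n$, which gives $F(y)=F'$ and shows the minimal polynomial of $y$ is $T^n-h$.
\end{enumerate}

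\emph{Normalization of $h$.} Write $v_P(h)=kn+r$ with $0\leqslant r\leqslant n-1$. Choose $t\in F$ with $v_P(t)=k$, for example a power of a local uniformizer. Replacing $y$ by $y/t$ replaces $h$ by $h/t^n$, which has valuation $r$ at $P$. The new element $y/t$ still generates $F'$ and still has minimal polynomial of the form $T^n-h'$.

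To normalize at finitely many places $P_1,\dots,P_s$ simultaneously, I would invoke the Strong Approximation Theorem to get a single $t$ with prescribed valuations $v_{P_j}(t)=k_j$.

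\emph{Main obstacle.} The delicate point is the phrase ``for any place $P$''. Making $0\leqslant v_P(h)\leqslant n-1$ hold at all places at once would require the divisor class of $\lfloor (h)/n\rfloor$ to be principal, which fails in general. I would therefore state and prove the normalization for any given place, or any finite set of places, which is the form used later for local ramification computations; over a rational function field, where every degree-zero divisor is principal, the global version can also be arranged.
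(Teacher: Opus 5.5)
The paper gives no proof of this statement. It is quoted from \cite{MR2241963}, and only the existence of a Kummer generator is used later, in Lemma~\ref{lem:finalproof1}. Your proof of the equivalence is the standard Kummer-theory argument and is correct. For necessity you use Hilbert 90 or a Lagrange resolvent; for sufficiency you use the injective homomorphism $\operatorname{Gal}(F'/F)\to\mu_n$, $\sigma\mapsto\sigma(y)/y$. You also read the normalization clause the right way: it concerns one prescribed place, or finitely many via approximation, and is achieved by rescaling $y\mapsto y/t$. That local version is all the paper needs.

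Your closing remark is wrong, however. Your criterion is correct: one needs $\operatorname{div}(t)=\lfloor \operatorname{div}(h)/n\rfloor$. But this divisor has degree $\sum_P\lfloor v_P(h)/n\rfloor\deg P<0$ as soon as some $v_P(h)$ is not divisible by $n$. By Theorem~\ref{thm:kummer_extension2}, that happens exactly when $F'/F$ ramifies somewhere. So in any ramified Kummer extension this divisor is never principal, whatever the genus.

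There is a more direct way to see it. If $v_P(h)\geqslant 0$ for all $P$, then $h$ has no poles and is a constant. So the global reading fails for every Kummer extension that is not a constant field extension, including over a rational function field. Your claim that the global version "can also be arranged" over $K(v)$ is therefore false.

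What does hold over $K(v)$ is normalization at all places but one. Dividing $y$ by a product of powers of monic irreducibles gives $0\leqslant v_P(h)\leqslant n-1$ at every finite place. Then $v_\infty(h)=-\sum_{P\neq\infty}v_P(h)\deg P$ is negative unless $h$ is constant. This is exactly what the paper does by hand in Lemma~\ref{lem:finalproof1}, ending with $v_\infty(h)=-\sum_i s_i$.
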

The field extension $F' / F$ in the theorem above is called a \textbf{Kummer extension} of degree $n$. Let $\zeta_n$ be a primitive $n$-th root of unity.
The Galois group of $F' / F$  is generated by $ \sigma  $,  where
 \begin{equation}\label{eq:automorphism_kummer}
     \sigma (y)=\zeta_n y \text{ and} \left.\sigma\right|_{F} =  \id .
    \end{equation} 
The following theorem characterizes the ramification indices of a Kummer extension.
\begin{thm}[Theorem 5.8.12 in \cite{MR2241963}]\label{thm:kummer_extension2}
 Let  $F' / F$ be a Kummer extension of degree $n$. 
Assume that $v_{P}(h)=m$ and $P^{\prime}$ is a place of $F'$ over $P$. We have
$$
e(P^{\prime} \mid P)=\frac{n}{(n, m)} .
$$
\end{thm}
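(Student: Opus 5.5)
The plan is to prove the formula by squeezing $e:=e(P'\mid P)$ between a lower bound from valuations and an upper bound from an intermediate field. Write $d=(n,m)$, $m=dm'$ and $n=dn'$, so that $(m',n')=1$. The goal is to show $n'\mid e$ and $e\leqslant n'$.

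\emph{Lower bound.} Apply $v_{P'}$ to $y^n=h$:
\[
n\,v_{P'}(y)=v_{P'}(h)=e\,v_P(h)=e\,m .
\]
Dividing by $d$ gives $n'\,v_{P'}(y)=e\,m'$. Since $(n',m')=1$, it follows that $n'\mid e$, and in particular $e\geqslant n'$.

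\emph{Upper bound.} Let $z=y^{n'}$, so that $z^d=h$, and put $E=F(z)$. Then $[F':E]\leqslant n'$, because $y$ is a root of $T^{n'}-z\in E[T]$.

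I claim that $P$ is unramified in $E/F$. Take a local parameter $t$ at $P$ and set $w=z\,t^{-m'}\in E$. Then $w^d=h\,t^{-dm'}=:u$, and $u$ is a unit at $P$ because $v_P(h)=dm'$. Thus $w$ is integral over the valuation ring $\mathcal O_P$ and $E=F(w)$.

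The minimal polynomial $\varphi(T)$ of $w$ over $F$ divides $T^d-u$, so it has coefficients in $\mathcal O_P$. Its reduction modulo $P$ divides $T^d-\bar u$ in the residue field. That polynomial is separable, since $\bar u\neq 0$ and $(d,p)=1$ (because $d\mid n$ and $(n,p)=1$). By Kummer's theorem on decomposition of places (equivalently, $w$ generates an integral basis locally, so the different exponent above $P$ vanishes), every place $Q$ of $E$ over $P$ satisfies $e(Q\mid P)=1$.

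Let $Q=P'\cap E$. By multiplicativity of ramification indices in towers and the Fundamental Equality \eqref{eq:FundamentalEquality} applied to $F'/E$,
\[
e=e(P'\mid Q)\,e(Q\mid P)=e(P'\mid Q)\leqslant [F':E]\leqslant n'.
\]
Combined with $n'\mid e$, this gives $e=n'=n/(n,m)$.

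The main obstacle is the unramifiedness of $P$ in $F(z)/F$. This is the step where the hypothesis $(n,p)=1$ is used, and where one must justify carefully that a separable reduction of the minimal polynomial of an integral generator forces $e(Q\mid P)=1$. If Kummer's theorem is not available in the needed form, I would instead use Theorem \ref{thm:Dedekind's Different Theorem}. Since $\varphi'(w)$ divides $d\,w^{d-1}$, which is a unit at every place over $P$, the local different exponent $d(Q\mid P)$ is bounded by $v_Q(\varphi'(w))=0$. Theorem \ref{thm:Dedekind's Different Theorem}(1) then gives $e(Q\mid P)-1\leqslant d(Q\mid P)=0$, hence $e(Q\mid P)=1$.
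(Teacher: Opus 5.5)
Your proof is correct. The paper does not prove this statement itself. It quotes it from Villa Salvador's textbook and uses it as a black box, so there is no in-paper argument to compare against. Your argument is essentially the standard textbook proof: pass to the intermediate field $E=F(y^{n'})$, where $P$ is unramified because the normalized radicand $h\,t^{-m}$ is a $P$-unit and $p\nmid d$, and then bound the ramification in $F'/E$ by its degree.

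The usual presentation first settles the case $(n,m)=1$. It then applies that case to $F'/E$ after computing $v_Q(y^{n'})$ at $Q=P'\cap E$. You instead obtain $n'\mid e$ directly from $n\,v_{P'}(y)=e\,m$ in $F'$, which is slightly shorter. As a side benefit, your argument uses neither that $K$ contains the $n$-th roots of unity nor that $T^n-h$ is irreducible, so it gives the formula for any $F'=F(y)$ with $y^n=h$ and $p\nmid n$.

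In the unramifiedness step, both justifications you offer are fine, with two small points to make explicit. First, the bound $d(Q\mid P)\leqslant v_Q(\varphi'(w))$ for an integral generator $w$ is a standard fact that is not among the paper's preliminaries. Second, $\varphi\in\mathcal{O}_P[T]$ follows from $\mathcal{O}_P$ being integrally closed.
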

 The following proposition comes from \cite{arakelian2024cyclotomicfunctionfieldsfinite}, which is useful for the proof of our main theorem. 
\begin{prop}\cite{arakelian2024cyclotomicfunctionfieldsfinite}\label{prop:2.4}
   Let $F'=F(y)$ be a Kummer extension of $F$ defined over $K$ by $y^n=h$ as before. Let $ \sigma  $ be the automorphism \eqref{eq:automorphism_kummer} generating the Galois group $\operatorname{Gal}(F'/F)$.
    Then the following statements hold:
    \begin{enumerate}
        \item Assume that $\tau \in \Aut_K(F' )$ is a nontrivial automorphism that normalizes the Galois group, i.e.,
     $\tau \langle \sigma \rangle \tau^{-1}=\langle \sigma \rangle$.  Then there exists a unique index $k \in\{1, \ldots, n-1\}$ with $k \nmid n$ such that 
    \[  \tau(y)=f y^k , \]
     where $f \in F$ satisfies $f^n=\tau(h) / h^k$. 
     \item If additionally $\tau$ commutes with $ \sigma $, then $k =1  $.
    \end{enumerate}
\end{prop}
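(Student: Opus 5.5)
The approach is to read off how $\tau$ acts on the eigenspace decomposition of $F'$ under $\sigma$.

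\textbf{Step 1: $\tau$ preserves $F$ and twists $\sigma$.} Since $\tau$ normalizes $\langle \sigma\rangle$, the element $\tau^{-1}\sigma\tau$ is again a generator of the cyclic group $\langle\sigma\rangle$ of order $n$. Hence $\tau^{-1}\sigma\tau=\sigma^k$ for a unique $k\in\{1,\dots,n-1\}$ with $\gcd(k,n)=1$; this coprimality is how I read the condition ``$k\nmid n$''. Because $\tau$ normalizes $\langle\sigma\rangle$, it maps the fixed field of $\langle\sigma\rangle$ to itself. By Galois theory for the cyclic extension $F'/F$ of Theorem~\ref{thm:kummer_extensions}, that fixed field is $F$, so $\tau(F)=F$.

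\textbf{Step 2: eigenspace decomposition.} Since $T^n-h$ is the minimal polynomial of $y$, the elements $1,y,\dots,y^{n-1}$ form an $F$-basis of $F'$, so $F'=\bigoplus_{i=0}^{n-1}Fy^i$. By \eqref{eq:automorphism_kummer}, $\sigma$ acts on $Fy^i$ as multiplication by $\zeta_n^i$. The values $\zeta_n^0,\dots,\zeta_n^{n-1}$ are distinct, so $Fy^i$ is exactly the $\zeta_n^i$-eigenspace of the $F$-linear map $\sigma$.

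\textbf{Step 3: locate $\tau(y)$.} The automorphism $\tau$ is a $K$-automorphism, so it fixes $\zeta_n\in K$. Using $\sigma\tau=\tau\sigma^k$ we get
\[
\sigma(\tau(y))=\tau(\sigma^k(y))=\tau(\zeta_n^k y)=\zeta_n^k\,\tau(y).
\]
Thus $\tau(y)$ lies in the $\zeta_n^k$-eigenspace, so $\tau(y)=f y^k$ for some $f\in F$. Raising to the $n$-th power and using $y^n=h$ gives $\tau(h)=f^n h^k$, i.e.\ $f^n=\tau(h)/h^k$.

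\textbf{Step 4: uniqueness and the commuting case.} The index $k$ is unique because the eigenspaces $Fy^i$ for distinct $i\in\{0,\dots,n-1\}$ intersect trivially and $\tau(y)\neq 0$. The case $k=0$ cannot occur: it would give $\tau(y)\in F=\tau(F)$, hence $y\in F$, a contradiction. If in addition $\tau$ commutes with $\sigma$, then $\tau^{-1}\sigma\tau=\sigma=\sigma^1$, so $k=1$, which is part~(2).

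I expect no real obstacle here. The only point needing care is the correct direction of conjugation, so that the exponent $k$ in $\tau(y)=fy^k$ matches the exponent in the normalizing relation $\tau^{-1}\sigma\tau=\sigma^k$.
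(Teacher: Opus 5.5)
Your proof is correct and is essentially the paper's argument. The paper expands $\tau(y)=\sum_i f_i y^i$ in the basis $1,y,\dots,y^{n-1}$ and compares coefficients in $\sigma^l\tau=\tau\sigma$, which is the same computation as your eigenspace argument, with the conjugation written as $\tau\sigma\tau^{-1}=\sigma^l$ so that $kl\equiv 1 \pmod n$. Your reading of ``$k\nmid n$'' as $\gcd(k,n)=1$ is the intended one, and your Step~1 observation $\tau(F)=F$ is harmless but not needed.
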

\begin{proof}
    \begin{enumerate}
        \item  Since $ 1, y , \cdots, y^{n-1} $ form a basis of $ F'/ F $, the automorphism $ \tau $ can be written as 
\[ \tau(y)=\sum_{i=0}^{n-1} f_i y^i \]
for  some $f_i \in F$.  Since $\tau$ is nontrivial, we may assume that $ f_k \not = 0 $ for some $k$.  If $\tau$ normalizes $\langle \sigma \rangle$, then we write 
\[ \sigma^{l} \tau=\tau \sigma 
\]
  for some $ l \in \{ 1,\ldots,n-1 \} $. Thus
  \[ 
\sum_{i=0}^{n-1}\left(\zeta_n^{i l}-\zeta_n\right) f_i y^i=0.
\]
We conclude that $ \left(\zeta_n^{i l}-\zeta_n\right) f_i =0 $ for each $i $. Then  $k l \equiv 1 \bmod n$, and $f_i=0$ for $i \neq k$. So $\tau$ is given by $\tau(y)=f_k y^k$. Moreover, from $  y^n =  h $, we obtain 
$$
\tau(h) = \tau\left(y^n\right)=(\tau(y))^n=f_k^n\left(y^n\right)^k=f_k^n h^k .
$$
That is $f_k^n=\tau(h) / h^k$.
\item 
In particular, if $\tau$ commutes with $\sigma$, i.e., $l = 1$, then the relation $k l \equiv 1 \pmod{n}$ yields $k = 1$.
    \end{enumerate}   
\end{proof}

\section{Main results}\label{sec:mainresult}
In what follows,  we let $\mathcal{F}$ denote a function field over $\mathbb{F}_q$ satisfying the conditions:
\begin{enumerate}[(A)]\label{condition123}
     \item $\mathcal{F}$ has a subgroup $G$ isomorphic to $(\mathbb{F}_q,+)   \times \mathbb{F}_q^*$, where $(\mathbb{F}_q,+)$ denotes the additive group of the finite field $\mathbb{F}_q$. \label{con:subgroup}
     \item $g(\mathcal{F})=1+\frac{q(q-3)}{2}$.\label{con:genus}
     \item $\mathcal{F}$ has exactly $q+1$ $\mathbb{F}_q$-rational places.\label{con:rationalpoints}
\end{enumerate}
From Section \ref{sub:cyclotomicfunctionfield} and the isomorphism 
\[ ( \mathbb{F}_q[x]/(x^2) )^* \cong (\mathbb{F}_q,+)   \times \mathbb{F}_q^* ,
 \]
the cyclotomic function field $L(\Lambda_M)$ is a typical example satisfying the three conditions above.

Our main result states that the converse is true. 
\begin{thm}\label{thm:main}
    Let $\mathcal{F}$ be a function field of genus $g$ over $\mathbb{F}_q$. Assume that $\mathcal{F}$ verifies the conditions $\eqref{con:subgroup}\eqref{con:genus}\eqref{con:rationalpoints}$. 
Then $\mathcal{F}$ is $\mathbb{F}_q$-isomorphic to the cyclotomic function field $L\left(\Lambda_{x^2}\right)$.
\end{thm}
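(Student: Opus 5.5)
The strategy is to realize $L(\Lambda_{x^2})$ as an explicit Artin--Schreier curve and then show that any $\mathcal{F}$ satisfying (A)(B)(C) has the same model. Let $\mu$ generate $\Lambda_x$, so $\mu^{q-1}=-x$ and $L(\Lambda_x)=\mathbb{F}_q(\mu)$. Let $\lambda$ generate $\Lambda_{x^2}$, so $\lambda^q+x\lambda=\mu$. Writing $\lambda=\mu z$ and $t=1/\mu$, this becomes
\[ z^q-z=t^{q-1}, \]
and $L(\Lambda_{x^2})=\mathbb{F}_q(t,z)$. Here the $(\mathbb{F}_q,+)$-factor acts by $z\mapsto z+a$ and the $\mathbb{F}_q^*$-factor by $t\mapsto ct$. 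As a check, the genus of this model is $(q-1)(q-2)/2=1+q(q-3)/2$. Accordingly, write $G=P\times H$ with $P\cong(\mathbb{F}_q,+)$ and $H\cong\mathbb{F}_q^*$, and aim to produce $t,z\in\mathcal{F}$ with $\mathcal{F}=\mathbb{F}_q(t,z)$ and $z^q-z=t^{q-1}$.

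\emph{Step 1 (the key claim on short orbits).} Since $G$ consists of $\mathbb{F}_q$-automorphisms, it permutes the $q+1$ rational places, and each orbit size divides $q(q-1)$. Passing to $\bar{\mathbb{F}}_q\mathcal{F}$, I would combine the Hurwitz formula in the form \eqref{eq:HurWitzgenusFormula11}, applied both to $G$ and to $P$, with $2g-2=q(q-3)$. The target is the following:
\begin{itemize}
\item there is a rational place $P_\infty$ fixed by all of $G$;
\item the remaining $q$ rational places form a single $G$-orbit;
\item $P_\infty$ is the only place ramified in $\mathcal{F}/\mathcal{F}^P$, and it is totally ramified there;
\item the genus of $\mathcal{F}^P$ is $0$.
\end{itemize}
Given $g(\mathcal{F}^P)=0$ and a totally ramified $P_\infty$, Hurwitz gives $q(q-3)=-2q+d(P_\infty)$, so $d(P_\infty)=q(q-1)$. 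This forces the Artin--Schreier pole order $m$ to satisfy $(q-1)(m+1)=q(q-1)$, i.e.\ $m=q-1$.

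The delicate part is ruling out the alternatives, namely $g^G=1$ or $g^P>0$, and other distributions of wild ramification. For this I would use that a $p$-group acting on $q+1\equiv 1 \pmod p$ rational places must fix one of them. The divisibility constraints on the different exponents from Theorem~\ref{thm:Dedekind's Different Theorem} should then leave no room for extra ramification. I expect this step to be the main obstacle.

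\emph{Step 2 (normal form).} The field $\mathcal{F}^P$ is rational and has the rational place below $P_\infty$, so $\mathcal{F}^P=\mathbb{F}_q(t)$ with $P_\infty$ over $t=\infty$. The extension $\mathcal{F}/\mathcal{F}^P$ is elementary abelian with group $\cong\mathbb{F}_q$. Hence $\mathcal{F}=\mathbb{F}_q(t,z)$ with $z^q-z=f(t)$, where the additive polynomial is $T^q-T$ because the roots are exactly the translations by $P\cong\mathbb{F}_q$. After the standard reduction $f\mapsto f+h^q-h$, $f$ is a polynomial of degree $q-1$.

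\emph{Step 3 (using $H$).} The group $H$ commutes with $P$, so it descends to $\mathcal{F}^P=\mathbb{F}_q(t)$ and fixes $\infty$. Being cyclic of order $q-1$, prime to $p$, it has a second rational fixed point, which I translate to $0$; then $H$ acts by $t\mapsto \zeta t$. By Proposition~\ref{prop:2.4}(2) in its additive analogue (commuting with all translations), each $h\in H$ satisfies $h(z)=z+g_h(t)$. This gives $f(\zeta t)-f(t)=g_h^q-g_h$. Comparing degrees $<q$, where $p$-th-power terms are absent after reduction, forces every monomial $t^j$ of $f$ with $\zeta^j\neq1$ to vanish. Hence $f=a\,t^{q-1}+b$ with $a\in\mathbb{F}_q^*$.

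Finally, the constant $b$ is handled as follows. Condition (C) says the $q$ places over $t\neq\infty$ that are rational must be exactly the $q$ places lying over $t=0$. Equivalently, $a\cdot 0+b$ has trace zero and $a c^{q-1}+b=a+b$ has nonzero trace for $c\neq0$. This lets us absorb $b$ into $z$ via $z\mapsto z+\beta$ with $\beta^q-\beta=b$, where $\beta\in\mathbb{F}_q$ exists since $\mathrm{Tr}(b)=0$ in the relevant sense; I will check this trace condition carefully. Replacing $z$ by $z/a$, which is legitimate because $a\in\mathbb{F}_q$, yields $z^q-z=t^{q-1}$. This identifies $\mathcal{F}$ with $L(\Lambda_{x^2})$.
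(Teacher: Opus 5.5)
Your route is a reasonable alternative to the paper's. You use the Artin--Schreier model $z^q-z=t^{q-1}$ via the $p$-part $P$, where the paper uses a Kummer model via $H$, and your model and $G$-action for $L(\Lambda_{x^2})$ are correct. But there is a genuine gap: Step 1 is only a list of targets, and it is exactly Lemma~\ref{lem:orbits}, which occupies all of Section~4 of the paper. There the paper shows $g(\tilde{\mathcal{F}}^G)=0$ and $2\le k\le 3$, then excludes Cases (I$'$) and (II) by comparing Hurwitz for $\tilde{\mathcal{F}}/\tilde{\mathcal{F}}^I$ or $\tilde{\mathcal{F}}/\tilde{\mathcal{F}}^H$ with Hurwitz for the extension down to $\tilde{\mathcal{F}}^G$. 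The tools you name do not reach this. The $p$-group fixed-point argument gives a $P$-fixed rational place, but nothing prevents its $H$-orbit from being nontrivial. Dedekind's bound $d\ge e=q$, applied at the set $X$ of $P$-fixed rational places, yields from Hurwitz for $\tilde{\mathcal{F}}/\tilde{\mathcal{F}}^P$ only $2g(\mathcal{F}^P)+|X|\le q-1$; this forces neither $g(\mathcal{F}^P)=0$ nor $|X|=1$. What is missing is an argument coupling $P$ with $H$. One option: for $Q\in X$ put $H_Q=H\cap G_Q$. Since $H_Q$ commutes with $P$, the lower ramification jumps of $P$ at $Q$ are multiples of $|H_Q|$ (Serre, \emph{Local Fields}, Ch.~IV, Prop.~9 and corollaries), so $d_P(Q)\ge (q-1)(1+|H_Q|)$. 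Summing over the $H$-orbit of $Q$, which has size $r=(q-1)/|H_Q|$, gives $(q-1)(q-1+r)$. Hurwitz for $\tilde{\mathcal{F}}/\tilde{\mathcal{F}}^P$ then forces $r=1$, $g(\mathcal{F}^P)=0$, and $Q$ to be the only ramified place, which yields all four of your targets.

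Even granting Step 1, two later steps are wrong as written. In Step 2, the formula $d(P_\infty)=(q-1)(m+1)$ requires $p\nmid m$, and reduction modulo $h^q-h$ cannot ensure this for an arbitrary isomorphism $P\cong(\mathbb{F}_q,+)$. For $q=4$, replacing $z$ by $z^2$ (composing with $a\mapsto a^2$) turns $z^4-z=t^3$ into $z'^4-z'=t^6$. This form admits no $h^4-h$ reduction, gives $(q-1)(m+1)=21$ while $d(P_\infty)=12$, and is $H$-invariant, so Step 3 does not remove it either. You must normalize $z$ by an $\mathbb{F}_p$-linear substitution $z\mapsto\sum_i\gamma_i z^{p^i}$, and this needs an actual argument. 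For instance, $H$-invariance plus the genus force every degree-$p$ subextension $y^p-y=cf$, $c\in\mathbb{F}_q^*$, to have reduced pole order exactly $q-1$; one then inverts the resulting additive polynomial. Finally, your trace criterion is misapplied. For $b\in\mathbb{F}_q$, the equation $z^q-z=b$ has a root in $\mathbb{F}_q$ only if $b=0$, because $\beta^q=\beta$ on $\mathbb{F}_q$. So rationality of the places above $t=0$ gives $b=0$ directly, and for $b\neq 0$ no $\beta\in\mathbb{F}_q$ with $\beta^q-\beta=b$ exists to absorb it.
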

When $q = 2$, the field $\mathcal{F}$ is simply the rational function field, and the theorem above holds trivially. It therefore suffices to consider the case $q \geqslant 3$.

Firstly, we give another expression for $L(\Lambda_{x^2})$ as follows.
\begin{prop}\label{kummerformoftheT^2}
    The cyclotomic function field $L(\Lambda_{x^2})$ with modulus $x^2$ is $\mathbb{F}_q$-isomorphic to the function field $\mathbb{F}_q(u,v)$ defined by 
    \begin{equation}\label{eq:kummerformoftheT^2}
        u^{q-1}=\lambda (v^q-v).
    \end{equation}
    for some constant $\lambda \in \mathbb{F}_q^{*}$.
\end{prop}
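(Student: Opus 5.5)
We make the field explicit through the Carlitz action. A nonzero element $\lambda_1\in\Lambda_{x^2}$ generates $\Lambda_{x^2}$ as an $\mathbb{F}_q[x]$-module, so $L(\Lambda_{x^2})=L(\lambda_1)$. Put $w:=\C_x(\lambda_1)=\lambda_1^q+x\lambda_1$. Then $\C_x(w)=\C_{x^2}(\lambda_1)=0$, so $w$ is a nonzero element of $\Lambda_x$ and satisfies
\[
w^{q-1}=-x .
\]
Hence $L(w)=L(\Lambda_x)=\mathbb{F}_q(w)$ is rational, and $x=-w^{q-1}$. Over this rational field, $\lambda_1$ satisfies
\[
\lambda_1^q-w^{q-1}\lambda_1=w .
\]
Set $z:=\lambda_1/w$. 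Dividing the relation by $w^q$ gives
\[
z^q-z=w^{1-q},
\]
so $L(\Lambda_{x^2})=\mathbb{F}_q(w,z)$. This is an Artin--Schreier equation whose right-hand side is a $(q-1)$-th power.

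Next we change variables to reach the shape \eqref{eq:kummerformoftheT^2}. Let $u:=w^{-1}$; the relation becomes
\[
u^{q-1}=z^q-z ,
\]
which is \eqref{eq:kummerformoftheT^2} with $v=z$ and $\lambda=1$. The map $(u,v)\mapsto(w^{-1},z)$ is invertible, since $w=u^{-1}$ and $z=v$, so $\mathbb{F}_q(u,v)=\mathbb{F}_q(w,z)=L(\Lambda_{x^2})$. The isomorphism is $\mathbb{F}_q$-linear because every element used lies in $\mathbb{F}_q(x,\lambda_1)$. No constant extension occurs: $T^{q-1}+x$ is Eisenstein at $x$, so $[L(w):L]=q-1$. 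Moreover $[\mathbb{F}_q(w,z):\mathbb{F}_q(w)]=q$ because $|(\mathbb{F}_q[x]/(x^2))^*|=q(q-1)$. Equivalently, $z^q-z-u^{q-1}$ is irreducible, since $u^{q-1}$ has a pole of order $q-1$ at $u=\infty$, which is coprime to $p$.

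The general constant $\lambda$ in the statement can be absorbed afterwards: the substitution $v\mapsto cv$ with $c\in\mathbb{F}_q^*$ gives $\lambda(c^qv^q-cv)=c\lambda(v^q-v)$, so every $\lambda\in\mathbb{F}_q^*$ yields an isomorphic field. Presumably the "for some $\lambda$" formulation is there so that later normalizations are flexible.

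The step most likely to hide an error is the sign and normalization bookkeeping in the Carlitz relation $\lambda_1^q+x\lambda_1=w$ combined with $x=-w^{q-1}$. I would recheck carefully that the result is $z^q-z=w^{1-q}$ and not $z^q+z$. With $x=-w^{q-1}$ the term $x\lambda_1$ equals $-w^{q-1}\lambda_1=-w^q z$, which gives $w^q(z^q-z)=w$, as claimed. As a sanity check, the genus of $u^{q-1}=v^q-v$ is $(q-1)(q-2)/2$, because this is a smooth plane curve of degree $q$. This agrees with $1+q(q-3)/2=(q^2-3q+2)/2=(q-1)(q-2)/2$.
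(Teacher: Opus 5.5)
Your argument is correct and is essentially the paper's: with $y=\lambda_1$, the paper's substitution $u=1/(y^q+xy)$, $v=y/(\lambda(y^q+xy))$ is exactly your $u=w^{-1}$, $v=z$ up to the scalar $\lambda$, and you additionally verify the degree $[\mathbb{F}_q(w,z):\mathbb{F}_q(w)]=q$, which the paper omits. One small fix: not every nonzero element of $\Lambda_{x^2}$ generates it. You need $\lambda_1\in\Lambda_{x^2}\setminus\Lambda_x$, which is precisely what guarantees $w\neq 0$, so that $z=\lambda_1/w$ is defined.
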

\begin{proof}
    From the construction of cyclotomic function fields, we know that $L(\Lambda_{x^2})=\mathbb{F}_q(x,y)$, with 
    \[
        (y^q+xy)^{q-1}+x=0.
    \]
    Define the new variables 
    \[ u=\frac{1}{y^q+xy},v=\frac{y}{\lambda (y^q+xy)}.\]
   It is evident that $y=\lambda \frac{v}{u}$ and $x=-\frac{\lambda}{v^q-v}$. So we have $L(x,y) = L(u,v)$.
    The proposition follows by checking that $  u  $  and $ v $ satisfy the equation \eqref{eq:kummerformoftheT^2}.
\end{proof}
\begin{notation}\label{not:subgroups}
Since $G$ is isomorphic to $ (\mathbb{F}_q,+)   \times \mathbb{F}_q^*$, it follows that $G$ contains a unique subgroup of order $q-1$ and a unique subgroup of order $q$. Denote such groups by $H$ and $I$, respectively.
\end{notation}
Our main technique for proving Theorem \ref{thm:main} is to analyze the ramification structure of the extension $\mathcal{F}/\mathcal{F}^H$ by using the Hurwitz genus formula. For this purpose, we need to understand the $G$-orbits of $\mathcal{F}$, which is given by the following lemma. 
\begin{lem}\label{lem:orbits}
   Let $\mathcal{F}$ be the function field satisfying conditions \eqref{con:subgroup}, \eqref{con:genus}, and \eqref{con:rationalpoints}. Then the action of $G$ on $\mathcal{F}$ splits the set of rational places of $\mathcal{F}$ into two short $G$-orbits, $\Omega_1$ and $\Omega_2$, with $|\Omega_1| = q$ and $|\Omega_2| = 1$. Moreover, the collection of all short $G$-orbits of $\mathcal{F}$ consists exactly of $\Omega_1$ and $\Omega_2$.
\end{lem}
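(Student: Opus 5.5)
The plan is to combine a counting argument on the set $\mathcal{R}$ of the $q+1$ rational places with the Hurwitz genus formula, working over $\bar{\mathbb{F}}_q$ (so $f=1$ and short orbits are exactly the ramified ones) for the groups $G$, $I$ and $H$ of Notation \ref{not:subgroups}. Since $G$ consists of $\mathbb{F}_q$-automorphisms, it permutes $\mathcal{R}$. Hence $\mathcal{R}$ is a union of $G$-orbits whose sizes divide $|G|=q(q-1)$ and sum to $q+1$. In particular, at least one of these orbits has size less than $q(q-1)$ (for $q\geqslant 3$), so $\mathcal{R}$ already contains short orbits.

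\emph{Step 1 (fixed points of $I$).} Because $I$ is a $p$-group acting on $\mathcal{R}$, the number $s=|S|$ of rational places fixed by $I$ satisfies $s\equiv q+1\equiv 1 \pmod p$, so $s\geqslant 1$. Since $G$ is abelian, $S$ is $G$-invariant, and every $G$-orbit in $S$ has stabilizer containing $I$; its size therefore divides $q-1$. Each $P\in S$ is totally and wildly ramified in $\bar{\mathcal{F}}/\bar{\mathcal{F}}^I$. By Theorem \ref{thm:Dedekind's Different Theorem} we have $d_P\geqslant q$, and the standard bound for elementary abelian wild ramification should give $d_P\geqslant 2(q-1)$.

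We then apply the Hurwitz formula in the form \eqref{eq:HurWitzgenusFormula11} to $I$:
\[
q(q-3)=2g-2\geqslant q(2g^I-2)+2(q-1)s .
\]
This should force $g^I=0$ and $s$ small. Combining it with $s\equiv 1\pmod p$, I would aim to show $s=1$, using also the $H$-action on $S$ to exclude the remaining small values. This yields $\Omega_2=\{P_\infty\}$, a single place fixed by all of $G$.

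\emph{Step 2 (the other $q$ rational places).} The remaining $q$ rational places carry a fixed-point-free action of $I$, so every $I$-orbit among them has size a nontrivial power of $p$. Their $G$-orbit sizes are of the form $p^a m$ with $m\mid q-1$, and these sizes sum to $q$. Next I would use the tame subgroup $H$, for which \eqref{eq:HurWitzgenusformula2} is an exact equality, together with the fact that $H$ fixes $P_\infty$. From this I would derive $g^H$ and the total number of $H$-fixed places. The goal is to show that the $q$ places split into $H$-fixed places forming a single $G$-orbit $\Omega_1$ of size $q$, i.e.\ with stabilizer exactly $H$. The congruence constraints, namely that sizes divide $q(q-1)$, sum to $q$, and are divisible by $p$, should leave only this configuration or a few configurations that Hurwitz for $H$ rules out.

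\emph{Step 3 (no other short orbits).} Finally, apply \eqref{eq:HurWitzgenusFormula11} to $G$ with $2g-2=q(q-3)$. We have $g^G=0$, since $g^I=0$ from Step 1. The contributions are as follows. For $\Omega_2$ we get $d\geqslant 2(q(q-1)-1)$ or so, with $e=q(q-1)$ wild. For $\Omega_1$ we get $d\cdot q\geqslant (q-2)q$. Adding these and comparing with $q(q-3)+2q(q-1)=3q^2-5q$ should leave no room for any further short orbit, since each such orbit contributes at least $|G|/2=q(q-1)/2$.

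The main obstacle is Step 1 and the precise different estimates: getting a sharp enough lower bound on $d_P$ at the wildly ramified places, both for $I$ and for $G$ at $P_\infty$, to pin down $s=1$ and to exclude extra short orbits. If the bound $d\geqslant 2(e-1)$ alone is too weak, I would compute higher ramification groups. In the abelian group $G$, the lower-numbering filtration of $G_{P_\infty}$ has tame quotient $H$ acting on the jumps, and Hasse–Arf then gives jumps that are multiples of $q-1$, hence a larger different.
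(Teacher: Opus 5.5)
\textbf{There is a genuine gap in Step~1}, and Steps~2 and~3 depend on it.

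With only $d_P\geqslant 2(q-1)$, Hurwitz for $I$ gives $q(q-1)\geqslant 2qg^I+2(q-1)s$, i.e.\ $s\leqslant q/2$. Combined with $s\equiv 1\pmod p$, this forces $s=1$ only when $q$ is prime or $q=4$: for $q=8$ it leaves $s\in\{1,3\}$, and for $q=9$ it leaves $s\in\{1,4\}$. It also does not force $g^I=0$ in general, and the divisibility constraints from the $H$-action on $S$ do not exclude these values. Your proposed Hasse--Arf fix is stated at a place with $G_P=G$, which is exactly what is to be proved.

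The missing idea is to apply the jump argument at \emph{every} $P\in S$. If $|G(P)|=m$, then $G_P=I\times H_P$ is abelian with $G_{P,1}=I$ and $e_0:=|G_P/G_{P,1}|=(q-1)/m$. Let $j\geqslant 1$ be the largest index with $G_{P,j}=I$. Herbrand's function gives $\varphi(j)=j/e_0$, so Hasse--Arf forces $e_0\mid j$. Since $I_i=I\cap G_{P,i}$, this yields
\[
d_P(\tilde{\mathcal{F}}/\tilde{\mathcal{F}}^I)\geqslant (j+1)(q-1)\geqslant (q-1)\bigl(1+\tfrac{q-1}{m}\bigr).
\]
So every $G$-orbit $O\subseteq S$ contributes at least $(q-1)(q-1+|O|)$, and
\[
q(q-1)\geqslant 2qg^I+\sum_{O\subseteq S}(q-1)\bigl(q-1+|O|\bigr)+(\text{other ramification in }\tilde{\mathcal{F}}/\tilde{\mathcal{F}}^I).
\]
Since $S\neq\emptyset$, this forces $S=\{P_\infty\}$ and $g^I=0$, with equality throughout. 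Hence $I$ acts freely on all other places.

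Step~2 is then immediate: the other $q$ rational places form one $G$-orbit of size $q$, with stabilizer $H$. Step~3 follows from the tame formula for $H$:
\[
q(q-3)=(q-1)(2g^H-2)+(q+1)(q-2)+(\text{rest}).
\]
This forces $g^H=0$ and leaves no other place with nontrivial stabilizer. Since $I$ acts freely off $P_\infty$, there is no further short $G$-orbit.

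\textbf{Your bound for $\Omega_2$ in Step~3 is false.} You use $d\geqslant 2\bigl(q(q-1)-1\bigr)$, but $d\geqslant 2(e-1)$ requires $G_0=G_1$. Here $G_0=G$ has a tame quotient of order $q-1$. In $L(\Lambda_{x^2})$ itself the different exponent at that place is $2q^2-3q$, which is less than $2(q^2-q-1)$ for $q\geqslant 3$. Inserting your bound into Hurwitz for $G$ gives $q^2-3q\geqslant q^2-2q-2$, i.e.\ $q\leqslant 2$, which would rule out the cyclotomic example. The jump argument gives the correct bound $(q(q-1)-1)+(q-1)^2=2q^2-3q$, and with it Step~3 becomes an exact equality.

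\textbf{Comparison with the paper.} The paper avoids higher ramification groups entirely. It uses only $d\geqslant e-1$ and $d\geqslant e$ for $p\mid e$, gets $g^G=0$ and $2\leqslant k\leqslant 3$ from Hurwitz for $G$, and then eliminates orbit-size configurations case by case via Hurwitz for $\tilde{\mathcal{F}}/\tilde{\mathcal{F}}^I$, $\tilde{\mathcal{F}}/\tilde{\mathcal{F}}^H$ and the intermediate extensions over $\tilde{\mathcal{F}}^G$. Your route, once repaired as above, is much shorter but relies on the finer ramification filtration.
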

We shall postpone the proof of this technical lemma to the next section. 
As a consequence of Lemma \ref{lem:orbits}, we derive that the ramification places in the extension $\mathcal{F}/\mathcal{F}^H$ are exactly the rational places of $\mathcal{F}$. Moreover, they are all totally ramified as shown in the following lemma.
\begin{lem}\label{lem:finalproof0}
    Let $\mathcal{F}$ be a function field satisfying \eqref{con:subgroup}\eqref{con:genus}\eqref{con:rationalpoints}. Let $H$ be the unique subgroup of $G$ of order $q-1$. Then all rational places of $\mathcal{F}$ are totally ramified in the extension $\mathcal{F}/\mathcal{F}^H$.
\end{lem}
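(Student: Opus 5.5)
We plan to argue over the algebraic closure. Put $\tilde{\mathcal{F}}=\mathcal{F}\overline{\mathbb{F}}_q$ and let $G$, and hence $H$, act on $\tilde{\mathcal{F}}$. Since $|H|=q-1$ is prime to $p$, the extension $\tilde{\mathcal{F}}/\tilde{\mathcal{F}}^H$ is tame, and the Hurwitz formula \eqref{eq:HurWitzgenusformula2} applies with exact equality. Note that $g(\tilde{\mathcal F})=g(\mathcal F)$ and that the short orbits do not change under constant field extension, so we can use Lemma \ref{lem:orbits} directly: the short $G$-orbits are $\Omega_1$ (with $q$ places) and $\Omega_2$ (with $1$ place), and these are exactly the $q+1$ rational places.

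The first step is to determine the stabilizers. By Lemma \ref{lem:orbits}, $|G_P|=|G|/|\Omega_1|=q-1$ for $P\in\Omega_1$, and $|G_P|=q(q-1)$ for the unique place $P_\infty\in\Omega_2$. Because $G\cong(\mathbb{F}_q,+)\times\mathbb{F}_q^*$ is abelian, it has a unique subgroup of order $q-1$, namely $H$. Hence $G_P=H$ for every $P\in\Omega_1$, while $G_{P_\infty}=G\supseteq H$. Thus every rational place is fixed by $H$. Over $\overline{\mathbb{F}}_q$ we have $f=1$, so by \eqref{eq:G_P} the stabilizer in $H$ equals the inertia group. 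The ramification index in $\tilde{\mathcal F}/\tilde{\mathcal F}^H$ is therefore $|H_P|=q-1=[\tilde{\mathcal F}:\tilde{\mathcal F}^H]$, that is, $P$ is totally ramified.

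To descend to $\mathcal F/\mathcal F^H$, we argue that a rational place $P$ lies over a rational place $Q$ of $\mathcal F^H$. Hence $f(P\mid Q)=1$, and $|H_P|=e f$ over $\mathbb F_q$ gives $e(P\mid Q)=q-1$ again. As a consistency check (optional), we can verify the count with Hurwitz. Any place outside $\Omega_1\cup\Omega_2$ lies in a long $G$-orbit, so its $G$-stabilizer is trivial and it is unramified for $H$. The formula then gives
\[
q(q-3)=(q-1)(2g^H-2)+(q+1)(q-2),
\]
which forces $g^H=0$. This is consistent, so no contradiction arises.

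The only subtle point is the identification $G_P=H$. For this we need uniqueness of the subgroup of order $q-1$, which is a Hall $p'$-subgroup of an abelian group. We also need the stabilizer computation itself, which rests entirely on Lemma \ref{lem:orbits}. Granting that lemma, the argument is short.
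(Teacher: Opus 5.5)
Your proposal is correct and is essentially the paper's proof. The stabilizer of $P\in\Omega_1$ has order $|G|/q=q-1$ and so equals $H$ by uniqueness; $H$ fixes the single place of $\Omega_2$; and $|H_P|=e(P\mid Q)f(P\mid Q)$ with $f(P\mid Q)=1$ for rational $P$ gives $e=q-1$. Your explicit descent step spells out what the paper leaves implicit, and your Hurwitz check is the paper's Corollary~\ref{cor:genus0}.

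One caution: the blanket claim that short orbits are unchanged under constant field extension is false in general, since a place of degree $d>1$ of $\mathcal F$ can have a larger stabilizer than the places of $\tilde{\mathcal F}$ above it. You only use it for rational places, where it does hold, so the argument is unaffected.
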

\begin{proof}
    By the first statement of Lemma \ref{lem:orbits}, the rational places of $\mathcal{F}$ under the action of $G$ form exactly two short $G$-orbits: an orbit $\Omega_1$ of cardinality $q$ and an orbit $\Omega_2$ of cardinality $1$. So the ramified places in the extension $\mathcal{F}/\mathcal{F}^G$ are exactly the rational places of $\mathcal{F}$. Since $\mathcal{F}^H $ is the fixed field of $H$ and $H$ is a subgroup of $G$, the ramified places in the extension $\mathcal{F}/\mathcal{F}^H$ are also exactly the rational places of $\mathcal{F}$. 
    
     Now we prove that these places are all totally ramified in the extension $\mathcal{F}/\mathcal{F}^H$. From Equation \eqref{eq:GP}, 
      it suffices to show that $H$ fixes all places in the short $G$-orbits.
      
      As a subgroup of $G$, $H$ clearly fixes the orbit $\Omega_2$, i.e., the unique place in $\Omega_2$ is fixed by $H$. 
    Take a place $P \in \Omega_1$. The stabilizer of $P$ under the action of $G$ has order $\frac{|G|}{|\Omega_1|} = q-1$. Since $H$ is the unique subgroup of $G$ of order $q-1$, it follows that $H$ is exactly the stabilizer of $P$ (and hence of every place in $\Omega_1$).  
\end{proof}
The following corollary yields that the fixed field $\mathcal{F}^H$ is rational, which is crucial for the proof of Theorem \ref{thm:main}.
\begin{cor}\label{cor:genus0}
    Let $\mathcal{F}$ be the function field satisfying conditions \eqref{con:subgroup}, \eqref{con:genus}, and \eqref{con:rationalpoints}.
 Let $H$ be the unique subgroup of $G$ of order $q-1$. Then the fixed field $\mathcal{F}^H$ is a rational function field.
\end{cor}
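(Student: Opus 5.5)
We pass to the constant field extension $\tilde{\mathcal{F}} = \mathcal{F}\overline{\mathbb{F}}_q$, where $H$ still acts faithfully with the same fixed field structure: $\tilde{\mathcal{F}}^H = \mathcal{F}^H\overline{\mathbb{F}}_q$, and genus is preserved under constant field extension. Since $|H| = q-1$ is coprime to $p$, the extension $\tilde{\mathcal{F}}/\tilde{\mathcal{F}}^H$ is tamely ramified. Therefore we may use the exact form \eqref{eq:HurWitzgenusformula2} of the Hurwitz genus formula, with $H$ in place of $G$.

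Next we identify the short $H$-orbits. By Lemma \ref{lem:finalproof0}, each of the $q+1$ rational places of $\mathcal{F}$ is totally ramified in $\mathcal{F}/\mathcal{F}^H$. Since they are totally ramified and rational, each lies under exactly one place of $\tilde{\mathcal{F}}$ and gives a fixed point of $H$. These contribute $q+1$ short $H$-orbits of length $1$, each with $e=q-1$.

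We also need that no other place of $\tilde{\mathcal{F}}$ is ramified over $\tilde{\mathcal{F}}^H$. The proof of Lemma \ref{lem:finalproof0} states that the ramified places of $\mathcal{F}/\mathcal{F}^H$ are exactly the rational places of $\mathcal{F}$. This comes from Lemma \ref{lem:orbits}: the only short $G$-orbits are $\Omega_1,\Omega_2$, and a place ramified for $H\le G$ has nontrivial stabilizer in $G$, so it lies in a short $G$-orbit. Making this precise over $\overline{\mathbb{F}}_q$ is the main point requiring care. I would interpret Lemma \ref{lem:orbits} as a statement about all short orbits of $\tilde{\mathcal{F}}$ under $G$, which is how it is used with the algebraically closed Hurwitz formula. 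Granting this, the short $H$-orbits of $\tilde{\mathcal{F}}$ are exactly the $q+1$ fixed points above.

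Plugging into \eqref{eq:HurWitzgenusformula2} with $g = 1+q(q-3)/2$, so $2g-2 = q(q-3)$:
\[
q(q-3) = (q-1)\bigl(2g^H-2\bigr) + (q+1)(q-2).
\]
Since $(q+1)(q-2) = q^2-q-2$, this gives
\[
(q-1)\bigl(2g^H-2\bigr) = q^2-3q-q^2+q+2 = -2(q-1),
\]
so $g^H = 0$.

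Finally, $\mathcal{F}^H$ is a genus-$0$ function field over $\mathbb{F}_q$. It has a rational place, namely the restriction of any rational place of $\mathcal{F}$, which exists by condition \eqref{con:rationalpoints}. A genus-$0$ function field with a degree-one place is rational, so $\mathcal{F}^H$ is a rational function field over $\mathbb{F}_q$.
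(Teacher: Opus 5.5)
Your proposal is correct and follows the same route as the paper: you apply the tame Hurwitz formula to $\tilde{\mathcal{F}}/\tilde{\mathcal{F}}^H$, taking the $q+1$ totally ramified rational places from Lemma \ref{lem:finalproof0} as the only ramification, and get $q(q-3)=(q-1)(2g^H-2)+(q+1)(q-2)$, hence $g^H=0$. You are also more careful than the paper in two places: you spell out why no other places ramify, and you supply the step from genus $0$ plus a degree-one place to rationality, which the paper leaves implicit.
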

\begin{proof}
     By Lemma \ref{lem:finalproof0}, the places of $\Omega_1$ and $\Omega_2$ are totally ramified in the field extension $\F/\F^H$. Denote by $g^H$ the genus of $\F^H$. By Hurwitz Genus Formula \eqref{eq:HurWitzgenusformula2},
    \[
        q(q-3)=(2g^H-2)(q-1)+q(q-1-1)+(q-1-1).
    \]
    This implies that $ g^H = 0 $, i.e., $ \mathcal{F}^H$ is rational.
    We complete the proof.
\end{proof}
Next, we derive a Kummer extension form of the function field $\mathcal{F}$. 
\begin{lem}\label{lem:finalproof1}
Let $\mathcal{F}$ be a function field satisfying \eqref{con:subgroup}\eqref{con:genus}\eqref{con:rationalpoints}. Then the function field  
\[
\mathcal{F} = \mathbb{F}_q(v, y)
\]  
is given by the Kummer equation  
\[
y^{q-1}= \lambda \prod_{\alpha_i \in \mathbb{F}_q}\left(v-\alpha_i\right)^{s_i},
\]
where:
 \begin{enumerate}
    \item   $ \lambda \in \mathbb{F}_q^{*} $; 
\item  Each exponent $ s_i $ is a positive integer satisfying $ 1 \leqslant s_i < q-1 $, and coprime to $ q-1 $;
\item The total sum $ S := \sum_{i } s_i $ is also coprime to $ q-1 $.
\end{enumerate}
Furthermore, we may assume that the short $G$-orbit of cardinality $1$ consists of the unique place at infinity by applying a coordinate transformation. 
 \end{lem}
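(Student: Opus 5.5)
By Corollary \ref{cor:genus0}, $\mathcal{F}^H$ is a rational function field, and the plan is to exploit this. Since $H$ is cyclic of order $q-1$ and $\mathbb{F}_q$ contains the $(q-1)$-th roots of unity, $\mathcal{F}/\mathcal{F}^H$ is a cyclic extension of degree $q-1$ prime to $p$. Theorem \ref{thm:kummer_extensions} then gives $\mathcal{F}=\mathcal{F}^H(y)$ with $y^{q-1}=h$, and we may normalize so that $0\leqslant v_P(h)\leqslant q-2$ at every place $P$ of $\mathcal{F}^H$. The proof then consists of choosing a good generator of $\mathcal{F}^H$ and reading off the divisor of $h$ from the ramification data.

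First we locate the ramified places. By Lemmas \ref{lem:orbits} and \ref{lem:finalproof0}, the places of $\mathcal{F}$ ramified over $\mathcal{F}^H$ are exactly the $q+1$ rational places, and each is totally ramified. Their restrictions to $\mathcal{F}^H$ are therefore $q+1$ distinct places. Each of them is rational, because total ramification forces $f=1$ and the place upstairs is rational. A rational function field over $\mathbb{F}_q$ has exactly $q+1$ rational places, so every rational place of $\mathcal{F}^H$ is totally ramified, and no other place ramifies. Next we choose the generator $v$ of $\mathcal{F}^H$ over $\mathbb{F}_q$ so that the image of the one-point orbit $\Omega_2$ is the pole of $v$. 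The remaining $q$ rational places are then the zeros $P_{\alpha}$ of $v-\alpha$, $\alpha\in\mathbb{F}_q$.

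Now we write $h$ as a constant times a product of powers of monic irreducible polynomials in $v$. Theorem \ref{thm:kummer_extension2} shows that an irreducible polynomial other than $v-\alpha$ must appear with exponent $\equiv 0 \pmod{q-1}$, since the place is unramified. The normalization $0\leqslant v_P(h)\leqslant q-2$ forces that exponent to be $0$. This step needs care, because the normalization from Theorem \ref{thm:kummer_extensions} is stated only for finite valuations. So I would instead multiply $h$ by $(q-1)$-th powers of such polynomials directly, which does not change $\mathcal{F}$. The same adjustment reduces each exponent $s_i$ of $v-\alpha_i$ into $[0,q-2]$. Total ramification at $P_{\alpha_i}$ then gives $(q-1)/\gcd(q-1,s_i)=q-1$, so $\gcd(s_i,q-1)=1$ and in particular $s_i\geqslant 1$. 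This yields $h=\lambda\prod_{\alpha_i}(v-\alpha_i)^{s_i}$ with $\lambda\in\mathbb{F}_q^*$ and $1\leqslant s_i<q-1$. The constant $\lambda$ lies in $\mathbb{F}_q^*$ because $h\in\mathbb{F}_q(v)$.

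Finally, $v_{\infty}(h)=-S$, and total ramification at the infinite place gives $\gcd(S,q-1)=1$. The main subtlety in the whole argument is justifying that every rational place of $\mathcal{F}^H$ is ramified, i.e.\ the counting argument via $f=1$, and making sure the coordinate choice really puts $\Omega_2$ at infinity. Both steps are handled by the counting above together with a Möbius transformation of $v$ over $\mathbb{F}_q$, which is available because the image of $\Omega_2$ is a rational place.
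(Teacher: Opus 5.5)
Your proposal is correct and follows the paper's proof essentially step for step: $\mathcal{F}^H$ is rational, $\mathcal{F}/\mathcal{F}^H$ is Kummer of degree $q-1$, Theorem \ref{thm:kummer_extension2} kills the non-linear factors and forces $\gcd(s_i,q-1)=1$, and $(q-1)$-th powers are cleared by rescaling the Kummer generator. Your version is slightly cleaner than the paper's in two places: you move the image of $\Omega_2$ to infinity before normalizing the exponents, whereas the paper applies the M\"obius change at the end, and you explicitly derive condition (3), $\gcd(S,q-1)=1$, from total ramification at the infinite place, which the paper leaves implicit.
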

\begin{proof}
Denote by $H \subset G$ the subgroup as in Notation \ref{not:subgroups}. From Corollary \ref{cor:genus0}, the fixed field $\mathcal{F}^H$ is rational. Assume that $v \in \mathcal{F}^H$ such that $\mathcal{F}^H=\mathbb{F}_q(v)$.  Notice that $ \mathcal{F} / \mathcal{F}^H$ is cyclic with Galois group $ H $. 
From Theorem \ref{thm:kummer_extensions} we have that $\mathcal{F}$ is a Kummer extension of $\mathbb{F}_q(v)$ of degree $ | H | = q-1$.

By Theorem \ref{thm:kummer_extensions}, the field $\mathcal{F}$ can be written as $\mathcal{F} = \mathbb{F}_q(v, z)$, where $z$ satisfies the Kummer equation
\begin{equation}\label{eq:kummer1}
z^{q-1} = h(v),
\end{equation}
for some rational function $h(v) \in \mathbb{F}_q(v)$. Let $\mathbb{F}_q=\{\alpha_0, \dots,\alpha_{q-1}\}$. Assume that $h(v)$ admits the factorization 
\[
h(v) = \lambda \prod_{i=0}^{q-1} (v-\alpha_i)^{r_i} \prod_{j} p_j(v)^{t_j}
\]
where $\lambda \in \mathbb{F}_q^*$, $r_i \in \mathbb{Z}$, $t_j \in \mathbb{Z}$,  and $p_j(v)$ are distinct monic irreducible polynomials of degree $> 1$.

 From Lemma \ref{lem:finalproof0}, the $q+1$ rational places of $\mathcal{F}$ lie in $\Omega_1$ and $\Omega_2$, denoted by $P_0,\dots,P_{q-1}$ (the places in $\Omega_1$) and $P_\infty$ (the place in $\Omega_2$), and they are totally ramified. These places, when restricted to $\mathbb{F}_q(v)$, must be the rational places of $\mathbb{F}_q(v)$.
It is clear that the rational places of $\mathbb{F}_q(v)$ are exactly the places corresponding to $v-\alpha_i$ for $i=0,\dots,q-1$ and the place at infinity.
We conclude that $t_j$ is divisible by $q-1$, while each $ r_i $ is coprime to $ q-1 $ by Theorem \ref{thm:kummer_extension2}.
Assume that $ r_i = n_i(q-1) + s_i $ with $ 1 \leqslant s_i < q-1 $, and that $ t_j = (q-1)m_j $ for some integers $ n_i $ and $ m_j $. Performing the change of variable 
\[ y = \frac{z}{\prod_{i=0}^{q-1} (v-\alpha_i)^{n_i} \prod_{j} p_j(v)^{m_j}} ,
\]
the equation \eqref{eq:kummer1} can be rewritten as
\[
    y^{q-1}=\lambda \prod_{\alpha_i \in \mathbb{F}_q}\left(v-\alpha_i\right)^{s_i}.
\]
Obviously,  $\mathbb{F}_q(v, z)=\mathbb{F}_q(v, y)$ and then the first assertion follows. 
The second assertion is clear by applying the transformation
\[
    v \mapsto \frac{1}{v - \alpha_i}
\]
if $P_\infty'$ lies over the zero of $v - \alpha_i$.
\end{proof}
In fact, we can further simplify the Kummer equation in Lemma \ref{lem:finalproof1} such that all $s_i$ are the same.
\begin{lem}\label{lem:finalKummer}
Let $\mathcal{F}$ be a function field satisfying \eqref{con:subgroup}\eqref{con:genus}\eqref{con:rationalpoints}. Then the function field  
$\mathcal{F}$ is isomorphic to 
$\mathbb{F}_q(v, y)$, where the relation between $y$ and $v$ is given by the Kummer equation
\[  
y^{q-1} =\lambda \prod_{\alpha_i \in \mathbb{F}_q}\left(v-\alpha_i\right)^{n}=\lambda (v^q-v)^n,
\]
where $ \lambda \in \mathbb{F}_q^{*} $ and $ n $ is a positive integer coprime to $ q-1 $.  
\end{lem}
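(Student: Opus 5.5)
We start from the Kummer form of Lemma \ref{lem:finalproof1}: $\mathcal{F}=\mathbb{F}_q(v,y)$ with $y^{q-1}=\lambda\prod_{i}(v-\alpha_i)^{s_i}$, with $P_\infty$ over the pole of $v$ and $P_0,\dots,P_{q-1}$ over the zeros of $v-\alpha_i$. The idea is to exploit the group $I$ of order $q$, which commutes with $H$ since $G$ is abelian. Since $I$ normalizes $H=\mathrm{Gal}(\mathcal{F}/\mathbb{F}_q(v))$, it acts on $\mathcal{F}^H=\mathbb{F}_q(v)$. This gives a homomorphism $I\to \Aut_{\mathbb{F}_q}(\mathbb{F}_q(v))=PGL_2(\mathbb{F}_q)$. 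Its kernel lies in $I\cap H=\{1\}$, so the map is injective.

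The image is a subgroup of $PGL_2(\mathbb{F}_q)$ of order $q$. By Lemma \ref{lem:orbits}, $G$ fixes $P_\infty$, so the image fixes the infinite place of $\mathbb{F}_q(v)$ and consists of affine maps $v\mapsto av+b$. A subgroup of order $q$ of the affine group $\{v\mapsto av+b\}$, which has order $q(q-1)$, must be the Sylow $p$-subgroup, namely the translations $v\mapsto v+b$ for $b\in\mathbb{F}_q$. So $I$ acts on the set $\{\alpha_i\}$ of finite rational places of $\mathbb{F}_q(v)$ simply transitively by translation. This matches $\Omega_1$ being a single $I$-orbit of size $q$.

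Next take $\tau\in I$ mapping to $v\mapsto v+b$. Since $\tau$ commutes with the generator $\sigma$ of $H$, Proposition \ref{prop:2.4}(2) gives $\tau(y)=fy$ with $f\in\mathbb{F}_q(v)$ and
\[
f^{q-1}=\frac{\tau(h)}{h}=\prod_i\frac{(v+b-\alpha_i)^{s_i}}{(v-\alpha_i)^{s_i}}.
\]
The right-hand side must therefore be a $(q-1)$-th power in $\mathbb{F}_q(v)$. Comparing valuations at $v=\alpha_j$, each must be divisible by $q-1$, so $s_{\pi(j)}\equiv s_j \pmod{q-1}$, where $\pi$ is the permutation induced by the translation $\alpha\mapsto\alpha-b$. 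Because $1\le s_i<q-1$, this congruence forces $s_{\pi(j)}=s_j$. Transitivity of the translations then makes all $s_i$ equal to a common value $n$, which is coprime to $q-1$ by Lemma \ref{lem:finalproof1}. Hence $h=\lambda(v^q-v)^n$.

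The main obstacle is justifying that the image of $I$ fixes $\infty$ and that its translations permute the $s_i$ correctly. The first point needs Lemma \ref{lem:orbits}: $\Omega_2=\{P_\infty\}$ is $G$-invariant, so its restriction to $\mathbb{F}_q(v)$ is invariant. The second point needs the valuation bookkeeping above. One should also check that the restriction of $\tau$ to $\mathbb{F}_q(v)$ is $\mathbb{F}_q$-linear fractional, which holds because $\tau$ is an $\mathbb{F}_q$-automorphism of a rational function field.
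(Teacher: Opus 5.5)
Your proof is correct and follows essentially the same route as the paper. Both arguments apply Proposition~\ref{prop:2.4}(2) to an element of the abelian group $G$ to see that $\tau(h)/h$ is a $(q-1)$-th power in $\mathbb{F}_q(v)$, compare valuations at the finite rational places using $1\leqslant s_i<q-1$, and conclude by transitivity on $\Omega_1$. The difference is minor: you identify the restricted action explicitly as the translations $v\mapsto v+b$ (via injectivity of $I\to PGL_2(\mathbb{F}_q)$ and the unique Sylow $p$-subgroup of the affine group). The paper instead reads the permuted exponents off $\operatorname{div}(\rho(h))=\rho(\operatorname{div}(h))$, and it leaves tacit the fact, made explicit in your version, that $\rho$ preserves $\mathcal{F}^H=\mathbb{F}_q(v)$.
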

\begin{proof}
Let $  P_i, P_\infty, s_i, \lambda $ be the same notation as in Lemma \ref{lem:finalproof1}. 
We have already known that $\mathcal{F} $ is given by the Kummer equation
\[
    y^{q-1} = h(v) = \lambda \prod_{\alpha_i \in \mathbb{F}_q}\left(v-\alpha_i\right)^{s_i}, 
\]
and  $P_\infty$ is located at infinity.
 It suffices to show that all exponents $s_i$ are equal.  
Let $\rho$ be an element of $G$ such that  $\rho$ acts nontrivially on the orbit $\Omega_1$. By assumption, $\rho $ permutes the set $ \{ P_i \}$, and fixes $ P_\infty $. Write $\rho (P_i) = P_{\rho (i) }$ for each $i$. It is clear that \[ 
\sigma(\operatorname{div}(h))=\operatorname{div}(\sigma(h))
\]  for all $ h  \in \mathcal{F}$ and $\sigma \in G$. In particular,
we obtain 
\begin{align*}
    \operatorname{div}(\rho(h(v)))  = \rho(\operatorname{div}(h(v)))
    &=(q-1 )\Big( \sum_{i=0}^{q-1} s_i \rho(P_i)-(s_0+\dots+s_{q-1})\rho(P_{\infty}) \Big) \\
    &=(q-1 )\Big( \sum_{i=0}^{q-1} s_i P_{\rho(i)}-(s_0+\dots+s_{q-1})P_{\infty} \Big) . 
\end{align*}
Thus, 
\[
\rho(h(v))=\tilde{\lambda} \prod_{\alpha_i \in \mathbb{F}_q}\left(v-\alpha_i\right)^{s_{\rho^{-1} (i)}}
\]
for some $\tilde{\lambda}\in \mathbb{F}_q^*$. 
On the other hand, since $G$ is abelian, the automorphism $\rho$ commutes with the Galois group $H$. Applying Proposition \ref{prop:2.4},  we have $\rho(y)=f(v) y$ for some $f(v) \in \mathbb{F}_q(v)$ and 
\[
    f(v)^{q-1} h(v)=\rho(h(v)).
\]
This gives the relation
$$
f(v)^{q-1} \cdot \lambda \prod_{\alpha_i \in \mathbb{F}_q}\left(v-\alpha_i\right)^{s_i}=\tilde{\lambda} \prod_{\alpha_i \in \mathbb{F}_q}\left(v-\alpha_i\right)^{s_{\rho^{-1}(i)}}.
$$ 
Consequently, $\tilde{\lambda}=\lambda$ and
$$
f(v)^{q-1}=\prod_{\alpha_i \in \mathbb{F}_q}\left(v-\alpha_i\right)^{s_{\rho^{-1}(i)}-s_i}.
$$
Therefore, $q-1$ divides $s_{\rho^{-1}(i)}-s_i$ for each $i$. Since $\left| s_{\rho^{-1}(i)}-s_i \right| <q-1$, we can conclude that $s_{\rho^{-1}(i)}= s_i$. 
 Note that $G$ acts transitively on ${P_0,\dots,P_{q-1}}$. By choosing a suitable $\rho \in G$, we obtain the desired result.
\end{proof}



Now we prove the main result with the help of Lemma \ref{lem:finalKummer}. 

\begin{proof}[Proof of Theorem \ref{thm:main}]
From Lemma \ref{lem:finalKummer}, we have  
$$
y^{q-1}=h(v)=\lambda \prod_{\alpha_i \in \mathbb{F}_q}\left(v-\alpha_i\right)^{n}=\lambda (v^q-v)^n,
$$
where $n$ and $(q-1)$ are coprime. 
Choose two integers $a$, $b$ such that 
 $ a n+ b(q-1) =1 $ 
and set $ u:= y^a (v^q-v)^b \in \mathcal{F}$. Then 
\begin{equation}\label{eq:uv}
    u^{q-1}= ( y^a (v^q-v)^b)^{q-1}=  \lambda^a (v^q-v)^{an+b(q-1)} =\lambda^{a}  (v^q-v) .
\end{equation}
It is evident to see that 
\[
    u^n=y^{an}(v^q-v)^{bn}=y^{an+b(q-1)} \lambda^{-b}= \lambda^{-b} y.
\]
It yields that $y \in \mathbb{F}_q(v, u)$. Therefore, we get
 \[ \mathcal{F} \cong \mathbb{F}_q(v, y)=\mathbb{F}_q(v, u) . \]
  Since the expression in \eqref{eq:uv} coincides with the desired expression for $L\left(\Lambda_{x^2}\right)$ in Proposition \ref{kummerformoftheT^2}, we conclude that $\mathcal{F}$ is $\mathbb{F}_q$-isomorphic to $L\left(\Lambda_{x^2}\right)$.
\end{proof}

\section{Proof of Lemma \ref{lem:orbits}}\label{sec:proofofLemma}
    In this section, we aim to prove Lemma \ref{lem:orbits}. 
     Following the notation in Section \ref{sec:mainresult}, we let $ \mathcal{F} $ be the function field that satisfies the conditions \eqref{con:subgroup}\eqref{con:genus}\eqref{con:rationalpoints}.  
     
    \subsection{Extension of automorphisms} 
 Given a function field $\mathcal{F}/\mathbb{F}_q$, let $\newF = \overline{\mathbb{F}}_q \mathcal{F}$ be its constant field extension. By Zorn's Lemma, every automorphism of $\mathcal{F}$ over $\mathbb{F}_q$ extends to an automorphism of $\newF$ over $\overline{\mathbb{F}}_q$. More precisely, there is a natural injective group homomorphism
\[
    \Aut_{\mathbb{F}_q}(\mathcal{F}) \hookrightarrow \Aut_{\overline{\mathbb{F}}_q}(\newF),
\]
see \cite[Corollary 14.3.9]{MR2241963}.

 Consider two rational places $P_1$ and $P_2$ of $\mathcal{F}$, and suppose that some $\sigma \in \operatorname{Aut}_{\mathbb{F}_q}(\mathcal{F})$ satisfies $\sigma(P_1) = P_2$ (we denote the extension of $\sigma$ to $\overline{\mathbb{F}}_q \mathcal{F}$ by the same symbol). Let $P_i'$ be the unique place of $\overline{\mathbb{F}}_q \mathcal{F}$ lying over $P_i$ for $i = 1, 2$. Then we necessarily have $\sigma(P_1') = P_2'$.
Denote by $\newF^G$, $\newF^H$, $\newF^I$ the fixed fields of $\newF$ under the action of $G$, $H$, and $I$, respectively (see Notation \ref{not:subgroups}). Let $g^G$, $g^H$, and $g^I$ be the genera of $\newF^G$, $\newF^H$, and $\newF^I$, respectively.

\subsection{Short $G$-orbits}
Assume that $\tilde{\F}$ has exactly $k$ short $G$-orbits (see Definition \ref{def:shortOrbit}), denoted by $\Omega_1, \dots, \Omega_k$. For $i = 1, \dots, k$, let $l_i$ be the cardinality of $\Omega_i$. Then, by Equation \eqref{eq:GP} and the definition of short $G$-orbit, we have
\begin{align}\label{eq:l_i|q(q-1)}
    l_i | q(q-1) \quad \text{and }   l_i < q(q-1). 
\end{align} 
The following lemma yields that $ k\geqslant 2$.
\begin{lem}\label{lem:rational}
    The set of rational places of $\mathcal{F}$ (identified with the corresponding places of $\tilde{\mathcal{F}}$) is a union of short $G$-orbits. Moreover, there are at least two such short orbits.
\end{lem}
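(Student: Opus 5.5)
The plan is to prove both assertions directly from the fact that $G$ consists of $\mathbb{F}_q$-automorphisms, together with the divisibility constraint \eqref{eq:l_i|q(q-1)}.

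\textbf{Step 1: the rational places form a union of $G$-orbits.} Every $\sigma \in G \subseteq \Aut_{\mathbb{F}_q}(\mathcal{F})$ preserves degrees of places, since the residue field of $\sigma(P)$ is $\mathbb{F}_q$-isomorphic to that of $P$. Hence $\sigma$ sends rational places of $\mathcal{F}$ to rational places. By the discussion in the previous subsection, the unique place of $\newF$ over a rational place $P_1$ is sent by $\sigma$ to the unique place over $\sigma(P_1)$. So the set of (places of $\newF$ over) rational places of $\mathcal{F}$ is $G$-stable, and is therefore a union of $G$-orbits of $\newF$.

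\textbf{Step 2: these orbits are short.} By condition \eqref{con:rationalpoints}, each such orbit has cardinality at most $q+1$. Since $q \geqslant 3$, we have $q(q-1) - (q+1) = q^2 - 2q - 1 > 0$. Thus every such orbit has size less than $|G| = q(q-1)$, and so it is short in the sense of Definition \ref{def:shortOrbit}.

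\textbf{Step 3: there are at least two such orbits.} Suppose the $q+1$ rational places formed a single orbit. By \eqref{eq:l_i|q(q-1)} we would then have $(q+1) \mid q(q-1)$. Now $\gcd(q+1, q) = 1$ and $\gcd(q+1, q-1) \mid 2$, so $q+1$ would have to divide $2$. This is impossible for $q \geqslant 3$. Since there are $q+1 > 0$ rational places, they split into at least two short orbits, and in particular $k \geqslant 2$.

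The only point needing care is Step 1: one must check that the extension of $\sigma$ to $\newF$ matches the action on places of $\mathcal{F}$. This is exactly the compatibility recorded just before the lemma, so I expect no real obstacle. The rest is elementary arithmetic.
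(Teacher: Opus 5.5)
Your proof is correct and follows essentially the same route as the paper. Both arguments use $G$-stability of the rational places, the bound $l \leqslant q+1 < q(q-1)$ to get shortness, and $q+1 \nmid q(q-1)$ to rule out a single orbit; the only difference is that you justify this non-divisibility via the gcd computation, which the paper simply asserts.
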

\begin{proof}
The set of rational places is stable under the action of $G$ because $G$ acts on $\mathcal{F}$ over $\mathbb{F}_q$ and preserves rationality. From Condition \eqref{con:rationalpoints}, $ \mathcal{F} $ contains exactly  $ q+1 $ rational places.
Let $ P $ be a rational place of $ \mathcal{F} $. The $G$-orbit $G(P)$ containing $P$ has cardinality $l$, and we have $l \leqslant q+1 < q(q-1)$ for any $q \geqslant 3$. The case $l = q+1$ is impossible, as $q+1 \nmid q(q-1)$. This implies that the set of rational places is the union of $k$ short $G$-orbits, with $k \geqslant 2$.
\end{proof}


\begin{remark}\label{rem:rationalplaces}
By Lemma~\ref{lem:rational}, the set of rational places of $\mathcal{F}$ decomposes into a disjoint union of short $G$-orbits. Let $\Omega_1$ and $\Omega_2$ be two distinct orbits among them, with cardinalities $l_1$ and $l_2$, respectively. Since the orbits are disjoint and consist entirely of rational places, we have
\[
l_1 + l_2 \leqslant q+1.
\]
In particular, each orbit is a proper nonempty subset of the set of rational places; hence
\[
0 < l_i < q+1 \qquad (i = 1,2).
\]
\end{remark}
 
\subsection{The genus of $\newF^G$}
Let $N = |G| = q(q-1)$. Denote by $g$ the genus of $\newF$.  
Applying the formula \eqref{eq:HurWitzgenusFormula12} to the Galois extension $\newF/\newF^G$ gives 
\begin{equation}\label{eq:2g2}
2g-2 \geqslant N(2g^G-2) + \sum_{i=1}^k (N - l_i).
\end{equation}
From Condition \eqref{con:genus}, we know $2g-2 = q(q-3)$. Substituting $N = q(q-1)$ into \eqref{eq:2g2}, we obtain
\begin{equation}\label{eq:2pq}
q(q-3) \geqslant q(q-1)(2g^G-2) + \sum_{i=1}^k \bigl(q(q-1)-l_i\bigr).
\end{equation}
From \eqref{eq:l_i|q(q-1)}, we have $l_i \leqslant \frac{q(q-1)}{2}$, and therefore 
\[ q(q-1)-l_i \geqslant \frac{q(q-1)}{2} \geqslant 0.  \] So the inequality \eqref{eq:2pq} reduces to the weaker estimate
\[
q(q-3) \geqslant  q(q-1)(2g^G-2).
\]
 Hence, $g^G \leqslant 1$. We now exclude the possibility $g^G = 1$. Substituting $g^G = 1$ into \eqref{eq:2pq}, we obtain
\[
q(q-3) \geqslant \sum_{i=1}^k \bigl(q(q-1)-l_i\bigr) \geqslant k \cdot \frac{q(q-1)}{2} .
\]
It follows that $k \leqslant 2\frac{q-3}{q-1} < 2$, which is a contradiction to Lemma~\ref{lem:rational}. Therefore, we conclude $g^G = 0$.

\subsection{Bounds for $k$: $2 \leqslant k \leqslant 3$}
Substituting $ g^G = 0 $ into the formula \eqref{eq:2pq} yields
\[
q(q-3) \geqslant -2q(q-1) + \sum_{i=1}^k \bigl(q(q-1)-l_i\bigr).
\]
It follows that
\begin{equation}\label{eq:simgai}
\sum_{i=1}^k l_i \geqslant (k-3)q(q-1) + 2q.
\end{equation} 
By Remark \ref{rem:rationalplaces}, we may assume that $\Omega_1$ and $\Omega_2$ are two distinct short orbits of rational places, which implies $l_1 + l_2 \leqslant q+1$. Moreover, it follows from \eqref{eq:l_i|q(q-1)} that  
\begin{align}\label{eq:upperbound}
    S_{\geqslant 3}: = \sum_{i=3}^k l_i \leqslant (k-2)\cdot\frac{q(q-1)}{2}.
\end{align} 
On the other hand, from \eqref{eq:simgai} we obtain a lower bound for $S_{\geqslant 3}$:
\begin{align}
S_{\geqslant 3}= \sum_{i=1}^k l_i - (l_1+l_2) & \geqslant (k-3)q(q-1) + 2q - (l_1+l_2) \nonumber \\
& \geqslant (k-3)q(q-1) + (q-1) . \label{eq:lowerbound}
\end{align} 
Combining \eqref{eq:upperbound} and \eqref{eq:lowerbound}, we get
\[
\frac{k-2}{2}\,q(q-1) \geqslant (k-3)q(q-1) + (q-1),
\]
which yields
\[
\frac{4-k}{2}\,q \geqslant 1.
\]
Thus, $k \leqslant 3$. Together with Lemma~\ref{lem:rational} which gives $k\geqslant 2$, we finally obtain
\[
2 \leqslant k \leqslant 3.
\]

        
       

          
 
\subsection{Analysis of the case $k=3$}
It suffices to exclude the possibility  $k=3$.
Assume now that there are exactly three short $G$-orbits, denoted by $\Omega_1,\Omega_2,\Omega_3$, with cardinalities $l_1,l_2,l_3$. From Lemma~\ref{lem:rational} we know that the set of rational places of $\tilde{\F}$ is a union of short $G$-orbits and contains at least two such orbits. Inequality \eqref{eq:simgai} (with $k=3$) gives
\begin{equation}\label{eq:l1l2l3}
l_1+l_2+l_3 \geqslant 2q.
\end{equation}
Since the total number of rational places is $q+1$, and $l_1+l_2+l_3 > q+1$ for any prime power $q\geqslant 3$, the three orbits cannot all consist entirely of rational places. Hence exactly two of them are composed of rational places. Without loss of generality, let $\Omega_1$ and $\Omega_2$ be the corresponding orbits. Then
\begin{align}\label{eq:l_1+l_2=q+1}
    l_1 + l_2 = q+1.
\end{align} 
Substituting this into \eqref{eq:l1l2l3} yields
\begin{align}\label{eq:l_3geqq-1}
    l_3 \geqslant q-1.
\end{align} 
By \eqref{eq:l_1+l_2=q+1}, we get $1 \leqslant l_1,l_2 \leqslant q$. 
Their sum is $q+1$, so either both lie strictly between $1$ and $q$, or one of them equals $1$ and the other equals $q$. Accordingly we distinguish two cases:
\begin{itemize}
    \item[(\Rmnum{1})] $1 < l_1 < q$ and $1 < l_2 < q$;\label{case:I}
    \item[(\Rmnum{2})] $\{l_1,l_2\} = \{1,q\}$.\label{case:II}
\end{itemize}
The two possibilities will be excluded separately.

        

The following lemma simplifies Case (\Rmnum{1}).
\begin{lem}\label{lem:p|l_1, l_2 is prime to p}
    Assume that $1<l_1<q$ and $1<l_2<q$. Then one of $l_1,l_2$ is coprime to $q$, and the other is divisible by $p$, where $q=p^t$ for some integer $t$ and prime number $p$.
\end{lem}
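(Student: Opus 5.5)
The plan is to combine a congruence argument with a divisibility argument, and to handle one exceptional configuration through the subgroup $I$ of order $q$.

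\textbf{Step 1: $l_1,l_2$ are not both divisible by $p$.} By \eqref{eq:l_1+l_2=q+1} we have $l_1+l_2=q+1\equiv 1 \pmod p$. So $p$ cannot divide both $l_1$ and $l_2$. It therefore remains to exclude the case where both are coprime to $p$.

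\textbf{Step 2: if both are coprime to $p$, then $\{l_1,l_2\}=\{q-1,2\}$.} Suppose $\gcd(l_i,p)=1$ for $i=1,2$. Since $l_i\mid q(q-1)$ by \eqref{eq:l_i|q(q-1)} and $q=p^t$, we get $l_i\mid q-1$.
\begin{itemize}
\item If neither $l_i$ equals $q-1$, then each is a proper divisor of $q-1$, so $l_i\leqslant (q-1)/2$. This gives $l_1+l_2\leqslant q-1<q+1$, a contradiction.
\item Hence one of them, say $l_1$, equals $q-1$, and then $l_2=2$.
\end{itemize}

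\textbf{Step 3: excluding $\{q-1,2\}$.} This is the main obstacle, and I would handle it with the Sylow $p$-subgroup $I$ of $G$.
\begin{itemize}
\item The stabilizers of places in $\Omega_1$ have order $q(q-1)/(q-1)=q$. Since $I$ is the unique subgroup of order $q$, these stabilizers equal $I$.
\item The stabilizers of places in $\Omega_2$ have order $q(q-1)/2$, which is divisible by $q$. Such a stabilizer therefore contains a subgroup of order $q$, and since $G$ is abelian with unique Sylow $p$-subgroup $I$, that subgroup is $I$.
\end{itemize}
Thus $I$ fixes all $q+1$ places of $\Omega_1\cup\Omega_2$. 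Over $\overline{\mathbb{F}}_q$ the residue degrees are $1$, so by \eqref{eq:G_P} these places are totally ramified in $\newF/\newF^I$ with ramification index $q$.

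Now apply \eqref{eq:HurWitzgenusFormula11} to $\newF/\newF^I$. Since $p\mid q$, each of these places has different exponent at least $q$, and all other contributions are nonnegative. This gives
\[
q(q-3)=2g-2\geqslant q(2g^I-2)+(q+1)q\geqslant -2q+q^2+q .
\]
That is, $-3q\geqslant -q$, which is impossible.

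\textbf{Conclusion.} Step 3 shows $l_1,l_2$ cannot both be coprime to $p$, and Step 1 shows they cannot both be divisible by $p$. Hence exactly one of them is divisible by $p$ and the other is coprime to $p$, hence coprime to $q$, as claimed.
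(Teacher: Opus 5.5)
Your proof is correct and follows the paper's argument essentially step for step. You use the congruence $l_1+l_2\equiv 1 \pmod p$, reduce to $\{l_1,l_2\}=\{q-1,2\}$, and then show that $I$ fixes all $q+1$ rational places, so that the Hurwitz formula for $\tilde{\mathcal{F}}/\tilde{\mathcal{F}}^I$ fails. The differences are minor. You use the sharper bound $d\geqslant e$ (valid since $p\mid q$) where the paper uses $d\geqslant e-1$. You also leave implicit that $q$ is odd, which your claim $q\mid q(q-1)/2$ needs; this is immediate because $l_2=2$ is assumed coprime to $p$, whereas the paper treats even $q$ as a separate case.
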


\begin{proof}
    By \eqref{eq:l_1+l_2=q+1}, reducing modulo $p$ shows that $p$ cannot divide both $l_1$ and $l_2$. Hence, it suffices to exclude the case in which $p$ divides neither $l_1$ nor $l_2$. Suppose then that $p \nmid l_1$ and $p \nmid l_2$. Under this assumption, both $l_1$ and $l_2$ are coprime to $q$. It follows from \eqref{eq:l_i|q(q-1)} that $l_i \mid (q-1)$.
    
    If both $l_1 < q-1$ and $l_2 < q-1$, then $l_i \mid (q-1)$ implies each $l_i \leqslant \frac{q-1}{2}$. Thus $l_1+l_2 \leqslant q-1$, contradicting $l_1+l_2 = q+1$. Hence the only possibility is $\{l_1,l_2\} = \{q-1,\,2\}$. 
    
    If \(q\) is even, then $2\nmid (q-1)$, contradicting the requirement that \(l_i \mid (q-1)\). 
    If $q$ is odd, now let $I$ be the subgroup of $G$ of order $q$. Since $q-1$ and $2$ are coprime to $q$, the orbit-stabilizer theorem implies that the stabilizer of any place in $\Omega_1$ and $\Omega_2$ has order divisible by $q$; consequently, $I$ is a subgroup of such a stabilizer. It follows that $I$ fixes every place in $\Omega_1 \cup \Omega_2$, and each place in these two orbits is totally ramified with ramification index $q$ in the extension $\newF/\newF^I$. Applying formula \eqref{eq:HurWitzgenusFormula12} to $\newF/\newF^I$ yields
    \[
        q(q-3) \geqslant (2g^I-2)q + (q-1)(q-1) + 2(q-1).
    \]
    Simplifying gives $g^I < 0$, a contradiction. Hence the assumption that both $l_1$ and $l_2$ are coprime to $p$ is impossible. Therefore exactly one of them is divisible by $p$.
\end{proof}
Without loss of generality, by Lemma \ref{lem:p|l_1, l_2 is prime to p}, Case (\Rmnum{1}) can be refined to   Case (\Rmnum{1}$'$) in which the cardinalities of the two short orbits of rational places are given by
\[
 l_1 = p^t h_1, l_2 = h_2 \text{ (or vice versa) with } h_1 \mid (q-1) , h_2 \mid (q-1)
\] 
and where $t$ is a positive integer satisfying $t < n$ (here $q = p^n$). 
By \eqref{eq:l_i|q(q-1)}, let $l_3 = p^l h_3$ where $h_3 \mid (q-1)$ and $l$ is an integer.

\subsection{Case (\Rmnum{1}$'$)} 
We now consider Case (\Rmnum{1}$'$). We need to compute the ramification indices for short orbits in the field extensions $\newF/\newF^I$ and $\newF^I/\newF^G $.
\begin{lem}\label{lem:theamificationindexofcase1}
Suppose that the cardinalities $ l_1 = p^{t} h_1, l_2 =  h_2, l_3 = p^{l} h_3 $, with $ h_i | (q-1) $ for each $i$ as before.
The ramification index $ e_i$ of  $\Omega_i$ in the field extension $\newF/\newF^I$ is given by 
\[
e_1 = p^{n-t},  e_2 = q,  e_3 =p^{n-l} .
\]
Accordingly, the ramification index $e_i'$ of any place lying under $\Omega_i$ in the field extension $\newF^I/\newF^G$ is given by 
\[
e_1' = \frac{q-1}{h_1} , e_2' = \frac{q-1}{h_2}, e_3' =\frac{q-1}{h_3} .
\]
\end{lem}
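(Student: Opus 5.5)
The plan is to use the orbit–stabilizer relation \eqref{eq:G_P} for the Galois extension $\newF/\newF^G$ over the algebraically closed constant field, where every relative degree equals $1$.

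\textbf{Step 1: stabilizer orders.} For a place $P\in\Omega_i$ we have $|G_P| = e(P\mid P\cap\newF^G) = N/l_i$. Substituting $N=q(q-1)=p^n(q-1)$ gives
\[
|G_P| = p^{n-t}\tfrac{q-1}{h_1},\qquad q\,\tfrac{q-1}{h_2},\qquad p^{n-l}\tfrac{q-1}{h_3}
\]
for $i=1,2,3$ respectively.

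\textbf{Step 2: ramification in $\newF/\newF^I$.} Since $G\cong(\mathbb{F}_q,+)\times\mathbb{F}_q^*$ is abelian, $G_P$ splits as the direct product of its Sylow $p$-part and its $p'$-part. Its Sylow $p$-subgroup is contained in the unique Sylow $p$-subgroup $I$, so
\[
G_P\cap I = \text{the Sylow } p\text{-subgroup of } G_P,
\]
of order equal to the $p$-part of $|G_P|$. The stabilizer of $P$ in $I$ is $I_P=G_P\cap I$. Applying \eqref{eq:G_P} to the Galois extension $\newF/\newF^I$ gives $e_i=|I_P|$, that is, $e_1=p^{n-t}$, $e_2=q$, $e_3=p^{n-l}$.

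\textbf{Step 3: ramification in $\newF^I/\newF^G$.} Ramification indices are multiplicative in the tower $\newF/\newF^I/\newF^G$ (the subextensions are Galois since $G$ is abelian). Hence
\[
e_i' = |G_P|/|I_P| = \frac{q-1}{h_i}.
\]
Equivalently, $e_i'$ is the order of the image of $G_P$ in $G/I\cong H$. This image is the $p'$-part of $G_P$, of order $(q-1)/h_i$, and the indices are independent of the choice of place above $P$ because $\newF^I/\newF^G$ is Galois.

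\textbf{Main obstacle.} The only delicate point is justifying $I_P=G_P\cap I$ with the correct order. This requires the direct-product decomposition of subgroups of the abelian group $G$, which uses $\gcd(q,q-1)=1$, together with the uniqueness of $I$ from Notation \ref{not:subgroups}. Everything else is bookkeeping with \eqref{eq:G_P} and the multiplicativity of ramification indices.
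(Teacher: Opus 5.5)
Your proof is correct and is essentially the paper's argument. The paper also computes $|G_P|$ by orbit--stabilizer, and also uses the fact that $I$ is the unique subgroup of order $q$, so that $I_P=I\cap G_P$ is the Sylow $p$-part of $G_P$; this gives $e_i$. The one cosmetic difference is in computing $e_i'$. You get $e_i'=|G_P|/|I_P|$ from multiplicativity of ramification indices in the tower $\newF/\newF^I/\newF^G$, whereas the paper counts the $h_i$ places of $\newF^I$ lying under $\Omega_i$ and applies the fundamental equality to $\newF^I/\newF^G$. The two computations are equivalent.
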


\begin{proof}
It follows from the orbit-stabilizer theorem that the stabilizer in $G$ of any place $P \in \Omega_1$ has order $p^{n-t} \cdot \frac{q-1}{h_1}$. Hence, this stabilizer contains a subgroup $I'$ of order $p^{n-t}$. As $G$ contains a unique subgroup of order $q$ (namely $I$), the subgroup $I'$ must lie in $I$; more precisely, $I \cap G_P = I'$. Consequently, we obtain $e_1 = |I'| = p^{n-t}$ by \eqref{eq:G_P}.

Denote the cardinality of the set of places of $\newF^I$ lying under the orbit $\Omega_1$ by $r$. From the Fundamental equality \eqref{eq:FundamentalEquality}, there are $p^t$ places lying over each place of $\newF^I$ under $\Omega_1$. Thus, $r=h_1$.  Again, the Fundamental equality \eqref{eq:FundamentalEquality} applied to $\newF^I / \newF^G$ yields $e_1'=\frac{q-1}{h_1}$. For the remaining orbits, the analysis follows a similar pattern.
\end{proof}
In Case (\Rmnum{1}$'$), we consider the cases \(l_3 = q-1\), \(l_3 = q\), and \(l_3 \geqslant q+1\) separately. For each possibility, we aim to obtain a contradiction using the conditions of Case (\Rmnum{1}$'$). 
 
If $l_3=q-1$, the ramification index $e_3$ of any place in $\Omega_3$ equals $q$ by Lemma \ref{lem:theamificationindexofcase1}.
       The formula \eqref{eq:HurWitzgenusFormula11} applied to $\newF/\newF^I$ gives 
       \begin{align*}
         2(1+\frac{q(q-3)}{2})-2 & \geqslant (2g^I-2)q+ e_1 l_1+e_2 l_2 +e_3 l_3 \\
         & = (2g^I-2)q+p^{n-t}(p^th_1)+qh_2+q(q-1).
       \end{align*} Thus,
       \[
           -\frac{(h_1+h_2)}{2}\geqslant g^I.
       \]
       This leads to a contradiction.
       
       If $l_3=q$,  the places in $\Omega_3$ are unramified in the field extension $\newF/\newF^I$ by Lemma \ref{lem:theamificationindexofcase1}.
        The formula \eqref{eq:HurWitzgenusFormula11} applied to $\newF/\newF^I$ gives 
      \begin{align*}
            2(1+\frac{q(q-3)}{2})-2 &\geqslant (2g^I-2)q+ e_1 l_1+e_2 l_2 +(e_3-1 )l_3 \\
            &= (2g^I-2)q+p^{n-t}(p^th_1)+qh_2.
      \end{align*}
       This implies
 \begin{align}\label{eq:11}
      g^I \leqslant \frac{q-1-(h_1+h_2)}{2}.
 \end{align}
But the Hurwitz Formula \eqref{eq:HurWitzgenusformula2} applied to $\newF^I/\newF^G$ gives 
       \[
         2g^I-2 = -2(q-1)+h_1(\frac{q-1}{h_1}-1)+h_2(\frac{q-1}{h_2}-1)+(q-1-1) .
       \]
       From this equation we can get $g^I=\frac{q-(h_1+h_2)}{2}$. This gives a contradiction to \eqref{eq:11}.

       If $l_3 \geqslant q+1$, we write $l_3 = p^l h_3$ as before, where $h_3 \mid (q-1)$. It is obvious that $h_3 \geqslant 2$, and $l\geqslant 1$. As before, by Lemma \ref{lem:theamificationindexofcase1} and the formula \eqref{eq:HurWitzgenusFormula11} applied to $\newF/\newF^I $ gives 
       \[
          2(1+\frac{q(q-3)}{2})-2  \geqslant (2g^I-2)q+qh_1+qh_2+qh_3.  
       \]Using this inequality, we derive
       \begin{align}\label{eq:12}
           g^I \leqslant \frac{q-1-(h_1+h_2+h_3)}{2}.
       \end{align}
        But the formula \eqref{eq:HurWitzgenusformula2} applied to $\newF^I/\newF^G$ gives 
       \[
         2g^I-2 = -2(q-1)+h_1(\frac{q-1}{h_1}-1)+h_2(\frac{q-1}{h_2}-1)+h_3(\frac{q-1}{h_3}-1).
       \]
       It follows that
       \[
           g^I=\frac{q+1-(h_1+h_2+h_3)}{2}.
       \]
       This gives a contradiction to \eqref{eq:12}. 
       So far, we have excluded the Case (\Rmnum{1}$'$).
       \subsection{Case (\Rmnum{2})} 
     We now turn to Case (\Rmnum{2}). Assume that $l_1 = q$ and $l_2 = 1$. As in Case (I), we first examine the corresponding ramification indices.
        \begin{lem}\label{lem:theramificationindexofcase2}
        Suppose that the cardinalities $ l_1 = q, l_2 =1, l_3 = p^{l} h_3 $, with $ h_3 | (q-1) $.
        The ramification index $ e_i$ of  $\Omega_i$ in the field extension $\newF/\newF^H$ is given by 
        \[
            e_1 = q-1,  e_2 = q-1,  e_3 =\frac{q-1}{h_3} .
        \]
        Accordingly, the ramification index $e_i'$ of any place lying under $\Omega_i$ in the field extension $\newF^H/\newF^G$ is given by 
        \[
            e_1' = 1,  e_2' = q,  e_3' =p^{n-l} 
        \]
     \end{lem}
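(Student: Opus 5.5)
The argument mirrors Lemma \ref{lem:theamificationindexofcase1}, with the roles of $I$ and $H$ exchanged. It rests on three facts:
\begin{itemize}
\item by \eqref{eq:G_P} over $\overline{\mathbb{F}}_q$, $f=1$, so $e(P\mid Q)=|H\cap G_P|$ for $\newF/\newF^H$;
\item by the orbit--stabilizer theorem, $|G_P|=q(q-1)/l_i$ for $P\in\Omega_i$;
\item $G\cong(\mathbb{F}_q,+)\times\mathbb{F}_q^*$ is abelian, with $H$ its unique subgroup of order $q-1$ and $I$ its unique subgroup of order $q$.
\end{itemize}

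First compute $e_1,e_2,e_3$.
\begin{itemize}
\item For $P\in\Omega_2$, $l_2=1$ gives $G_P=G\supseteq H$, so $e_2=q-1$.
\item For $P\in\Omega_1$, $|G_P|=q-1$. Since $H$ is the unique subgroup of order $q-1$, $G_P=H$, and $e_1=q-1$.
\item For $P\in\Omega_3$, $|G_P|=p^{n-l}\cdot\frac{q-1}{h_3}$. As $G$ is abelian, $G_P$ is the direct product of its Sylow $p$-part and its $p'$-part. The $p'$-part is a subgroup of order $\frac{q-1}{h_3}$ of the cyclic group $H$, which is the full $p'$-Hall subgroup of $G$. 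Hence $H\cap G_P$ has order exactly $\frac{q-1}{h_3}$, so $e_3=\frac{q-1}{h_3}$.
\end{itemize}

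Next compute the indices in $\newF^H/\newF^G$, using multiplicativity of ramification indices in the tower $\newF/\newF^H/\newF^G$. Since $G$ is abelian, $G_P$ depends only on the orbit, so $e(P\mid Q_G)=|G_P|$, where $Q_G$ is the place of $\newF^G$ below $P$. Therefore $e_i'=|G_P|/e_i$:
\begin{itemize}
\item $\Omega_1$: $e_1'=(q-1)/(q-1)=1$;
\item $\Omega_2$: $e_2'=q(q-1)/(q-1)=q$;
\item $\Omega_3$: $e_3'=p^{n-l}\frac{q-1}{h_3}\big/\frac{q-1}{h_3}=p^{n-l}$.
\end{itemize}
Equivalently, one can count places of $\newF^H$ under each orbit with the Fundamental Equality \eqref{eq:FundamentalEquality}, exactly as in Lemma \ref{lem:theamificationindexofcase1}. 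For example, under $\Omega_1$ there are $q/1$ places of $\newF^H$, each of index $1$ over $\newF^G$, since $\Omega_1$ is a single $G$-orbit over one place of $\newF^G$.

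The only step needing care is the claim $|H\cap G_P|=\frac{q-1}{h_3}$ for $\Omega_3$. It relies on $H$ being the unique Hall $p'$-subgroup of the abelian group $G$, so it contains every element of order prime to $p$. I would state this explicitly; once it is in place, the rest is bookkeeping.
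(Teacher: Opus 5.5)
Your proposal is correct and follows essentially the paper's argument. You use orbit--stabilizer to get $|G_P|$, then identify $H\cap G_P$: it is $G_P=H$ on $\Omega_1$, $G_P=G$ on $\Omega_2$, and the $p'$-part of $G_P$ on $\Omega_3$. From this you deduce the indices $e_i'$ in $\newF^H/\newF^G$.

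The differences are minor. You get $e_i'=|G_P|/e_i$ from multiplicativity of ramification indices in the tower, while the paper counts places of $\newF^H$ using the Fundamental Equality. Your remark that $H$ contains every element of order prime to $p$ spells out a step the paper compresses into ``since there is only one subgroup of order $q-1$, $H'\subseteq H$.''
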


\begin{proof}
    Since $|\Omega_1| = q$, the orbit-stabilizer theorem implies that the stabilizer of any place $P \in \Omega_1$ is precisely the subgroup $H \leqslant G$ of order $q-1$. Therefore, the claim for $\Omega_1$ follows directly. Since $l_2 = 1$, the stabilizer of the unique place in $\Omega_2$ is $G$. Hence, the result for $\Omega_2$ follows.
    For the orbit $\Omega_3$,
    since $l_3=p^l h_3$, the order of the stabilizer of any place $P$ in $\Omega_3$ under the action of $G$ is $p^{n-l} \frac{q-1}{h_3}$. Thus, the stabilizer of any place $P$ in $\Omega_3$ contains a subgroup $H'$ of order $\frac{q-1}{h_3}$. Since there exists only one subgroup of $G$ of order $q-1$, $H'$ must be contained in $H$. In particular, $H \cap G_P=H'$. It follows from \eqref{eq:G_P} that
         \[
             e_3=|H'|=\frac{q-1}{h_3}.
         \]
         Let $r$ denote the number of places of $\newF^H$ lying under the orbit $\Omega_3$. By the Fundamental Equality \eqref{eq:FundamentalEquality}, each such place has exactly $h_3$ places of $\Omega_3$ lying over it. Thus, $r=p^l$. The Fundamental equality \eqref{eq:FundamentalEquality} applied to $\newF^H / \newF^G$ yields $e_3'=p^{n-l}$. 
\end{proof}
 We consider the cases \(l_3 = q-1\), \(l_3 = q\), and \(l_3 \geqslant q+1\) separately as before. For each possibility, we aim to obtain a contradiction using the conditions of Case (\Rmnum{2}). 

If $l_3 = q-1$, then by Lemma \ref{lem:theramificationindexofcase2} the ramification indices of the short orbits $\Omega_i$ are known. Applying the Hurwitz Genus Formula \eqref{eq:HurWitzgenusformula2} to $\newF/\newF^H$ yields
\[
q(q-3) = (2g^H-2)(q-1)+(q-2)q + (q-2),
\]
which implies $g^H = 0$. On the other hand, applying formula \eqref{eq:HurWitzgenusFormula11} to the extension $\newF^H/\newF^G$ gives
\[
2g^H-2 \geqslant -2q + q + q,
\]
so that $g^H \geqslant 1$. This contradicts $g^H = 0$.

If $l_3 = q$, applying the Hurwitz Genus Formula \eqref{eq:HurWitzgenusformula2} and Lemma \ref{lem:theramificationindexofcase2} to the extension $\newF/\newF^H$ yields
\[
q(q-3) = (2g^H-2)(q-1) + q(q-2) + (q-2) + q(q-2),
\]
which simplifies to
\[
g^H = \frac{-q^2 + 2q}{2(q-1)}.
\]
For $q \geqslant 3$, the right-hand side is negative, contradicting the non-negativity of the genus.

       If $l_3=p^l h_3 \geqslant q+1$, we consider two cases depending on whether $h_3 = q-1$ or $h_3 < q-1$. If $h_3<q-1$, then the ramification index of any place in $\Omega_3$ is strictly greater than 1 since the intersection of $H$ and the stabilizer of $\Omega_3$ is nontrivial. Let $d_3$ denote the different exponent of any place in $\Omega_3$ in the field extension $\newF/\newF^H$ . By Theorem \ref{thm:Dedekind's Different Theorem}, $d_3 \geqslant 1$. The formula \eqref{eq:HurWitzgenusformula2} and Lemma \ref{lem:theramificationindexofcase2} applied to $\newF/\newF^H$ give 

       \begin{align*}
           q(q-3) &=(2g^H-2)(q-1)+(e_1-1)q+(e_2-1)+d_3 l_3 \\
           &\geqslant (2g^H-2)(q-1)+(q-2)q+(q-2)+1.
       \end{align*} 
      This inequality yields $g^H < 0$, a contradiction.
      
 If $h_3=q-1$, it follows that $l<n$ by \eqref{eq:l_i|q(q-1)}. By Lemma \ref{lem:theramificationindexofcase2}, we have $e_3 = 1$ and $e_3' = p^{n-l}$. Applying the Hurwitz Genus Formula \eqref{eq:HurWitzgenusformula2} to $\newF/\newF^H$ gives
       \[
           q(q-3)=(2g^H-2)(q-1)+(q-2)q+(q-2).
       \]This equality yields $g^H=0$.
       But formula \eqref{eq:HurWitzgenusFormula11} applied to $\newF^H/\newF^G$ gives 
       \[
           2g^H-2 \geqslant (0-2)q+(1-1)q+q+p^{n-l}p^l.
       \] This implies that $
           g^H \geqslant 1$.
       We arrive at a contradiction. Therefore, Case (\Rmnum{2}) is excluded.
       
       From the above discussion, there are precisely two short $G$-orbits.
 \subsection{The cardinalities of two short orbits}   
   From the discussion above, we have established that there are exactly two short $G$-orbits, with cardinalities $l_1$ and $l_2$.

Suppose first that $1 < l_1 < q$ and $1 < l_2 < q$. By Lemma \ref{lem:p|l_1, l_2 is prime to p}, one of $l_1, l_2$ is coprime to $q$ while the other is divisible by $p$. Without loss of generality, write $l_1 = p^t h_1$ and $l_2 = h_2$, where $h_1 \mid (q-1)$ and $h_2 \mid (q-1)$ as before. By Lemma \ref{lem:theamificationindexofcase1}, $e_1' = \frac{q-1}{h_1}$ and $e_2' = \frac{q-1}{h_2}$. Applying the Hurwitz Genus Formula \eqref{eq:HurWitzgenusformula2} to the extension $\newF^I/\newF^G$ yields
\[
2g^I - 2 = -2(q-1) + h_1\left(\frac{q-1}{h_1} - 1\right) + h_2\left(\frac{q-1}{h_2} - 1\right),
\]
which simplifies to
\[
g^I = \frac{2 - h_1 - h_2}{2}.
\]
Since $h_2 > 1$ and $h_1 \geqslant 1$, we obtain a contradiction. Therefore, this configuration cannot occur. Consequently, after possibly interchanging $l_1$ and $l_2$, we must have $l_1 = q$ and $l_2 = 1$.
 
In conclusion, the set of places of $\mathcal{F}/\mathbb{F}_q$ has exactly two short $G$-orbits under the action of $G$: $\Omega_1$ of cardinality $q$, consisting of $q$ $\mathbb{F}_q$-rational places, and $\Omega_2$ of cardinality $1$, consisting of a single $\mathbb{F}_q$-rational place.



 \bibliographystyle{amsplain}
 \bibliography{paper}

\end{document}